\newcommand{\E}{\mathds{E}}
\renewcommand{\P}{\mathds{P}}
\def\simiid{\stackrel{\mbox{\scriptsize{iid}}}{\sim}}
\theoremstyle{plain}
\newtheorem{thm}{\textsc{Theorem}}
\newtheorem{lem}{\textsc{Lemma}}
\newtheorem{prp}{\textsc{Proposition}}
\begin{document}

\begin{frontmatter}

\title{Large deviation principles for the Ewens-Pitman sampling model}
\runtitle{Large deviations for the Ewens-Pitman sampling model}

\author{\fnms{Stefano Favaro}\thanksref{t1}\ead[label=e1]{stefano.favaro@unito.it}}
\and
\author{\fnms{Shui Feng}\ead[label=e2]{shuifeng@univmail.cis.mcmaster.ca}}

\address{
Department of Economics and Statistics\\ University of Torino\\ Corso Unione Sovietica 218/bis, I--10134 Torino, Italy \\
 \printead{e1}}

\address{Department of Mathematics and Statistics\\ McMaster University\\ Hamilton, Canada  L8S 4K1 \\ \printead{e2}}

\runauthor{S. Favaro and S. Feng}

\thankstext{t1}{Also affiliated to Collegio Carlo Alberto, Moncalieri, Italy.}

\affiliation{University of Torino and McMaster University}

\vspace{0.4cm}
\begin{abstract}
Let $M_{l,n}$ be the number of blocks with frequency $l$ in the exchangeable random partition induced by a sample of size $n$ from the Ewens-Pitman sampling model. We show that, as $n$ tends to infinity, $n^{-1}M_{l,n}$ satisfies a large deviation principle and we characterize the corresponding rate function. A conditional counterpart of this large deviation principle is also presented. Specifically, given an initial sample of size $n$ from the Ewens-Pitman sampling model, we consider an additional sample of size $m$. For any fixed $n$ and as $m$ tends to infinity, we establish a large deviation principle for the conditional number of blocks with frequency $l$ in the enlarged sample, given the initial sample. Interestingly, the conditional and unconditional large deviation principles coincide, namely there is no long lasting impact of the given initial sample. Potential applications of our results are discussed in the context of Bayesian nonparametric inference for discovery probabilities.
\end{abstract}

\begin{keyword}[class=AMS]
\kwd[Primary ]{60F10} \kwd{62F15}
\end{keyword}

\begin{keyword}
\kwd{Bayesian nonparametrics} \kwd{discovery probability} \kwd{Ewens-Pitman sampling model} \kwd{exchangeable random partition} \kwd{large deviations} \kwd{popultation genetics} \kwd{species sampling}
\end{keyword}

\end{frontmatter}

\section{Introduction}

The present paper focuses on exchangeable random partitions induced by the so-called Ewens-Pitman sampling model, first introduced in Pitman (1995). Let $\mathbb{X}$ be a Polish space and let $\nu$ be a nonatomic probability measure on $\mathbb{X}$. For any $\alpha\in[0,1)$ and $\theta>-\alpha$ let us consider a sequence  $(X_{i})_{i\geq1}$ of $\mathbb{X}$-valued random variables such that $\P[X_{1}\in\cdot]=\nu(\cdot)$, and for any $i\geq1$
\begin{equation}\label{predict}
\P[X_{i+1}\in\cdot\,|\,X_{1},\ldots,X_{i}]=\frac{\theta+j\alpha}{\theta+i}\nu(\cdot)+\frac{1}{\theta+i}\sum_{l=1}^{j}(n_{l}-\alpha)\delta_{X_{l}^{\ast}}(\cdot)
\end{equation}
with $X_{1}^{\ast},\ldots,X_{j}^{\ast}$ being the $j$ distinct values in $(X_{1},\ldots,X_{i})$ with frequencies $\mathbf{n}=(n_{1},\ldots,n_{j})$. The predictive distribution \eqref{predict} is referred to as the Ewens-Pitman sampling model. Pitman \cite{Pit(95)} showed that the sequence $(X_{i})_{i\geq1}$ generated by \eqref{predict} is exchangeable and its de Finetti measure $\Pi$ is the distribution of the two parameter Poisson-Dirichlet process $\tilde{P}_{\alpha,\theta,\nu}$ introduced in Perman et al. \cite{Per(92)}, i.e.
\begin{align}\label{eq:bnpmodel}
X_i\,|\,\tilde P_{\alpha,\theta,\nu} & \quad\simiid\quad \tilde P_{\alpha,\theta,\nu}\qquad i=1,\ldots,n\\[4pt]
\notag\tilde P_{\alpha,\theta,\nu} & \quad\sim\quad \Pi,
\end{align}
for any $n\geq1$. See Pitman and Yor \cite{Pit(97)} for details on $\tilde{P}_{\alpha,\theta,\nu}$. For $\alpha=0$ the Ewens-Pitman sampling model reduces to the celebrated sampling model by Ewens \cite{Ewe(72)}. The Ewens-Pitman sampling model plays an important role in several research areas such as population genetics, Bayesian nonparametrics, machine learning, combinatorics and statistical physics. See Pitman \cite{Pit(06)} for a detailed account.

According to \eqref{predict} and \eqref{eq:bnpmodel}, a sample $(X_{1},\ldots,X_{n})$ from $\tilde{P}_{\alpha,\theta,\nu}$ induces an exchangeable random partition of $\{1,\ldots,n\}$ into $K_{n}$ blocks with frequencies $\mathbf{N}_{n}=(N_{1},\ldots,N_{K_{n}})$. See Pitman \cite{Pit(95)} for details. Such a random partition has been the subject of a rich and active literature and, in particular, there have been several studies on the large $n$ asymptotic behaviour of $K_{n}$. Specifically, for any $\alpha\in(0,1)$ and $q > -1$, let $S_{\alpha,q\alpha}$ be a positive random variable such that
\begin{equation}\label{eq:sdiversity}
\P[S_{\alpha, q\alpha}\in dy]=\frac{\Gamma(q\alpha+1)}{\alpha\Gamma(q+1)}y^{q-1-1/\alpha}f_{\alpha}(y^{-1/\alpha})dy,
\end{equation}
where $f_{\alpha}$ denotes the density function of a positive $\alpha$-stable random variable. For any $\alpha\in(0,1)$ and $\theta>-\alpha$ Pitman \cite{Pit(96)} established a fluctuation limit for $K_{n}$, namely
\begin{equation}\label{eq:asim_prior_2pd}
\lim_{n\rightarrow+\infty}\frac{K_{n}}{n^{\alpha}}=S_{\alpha,\theta}\qquad\text{a.s.}
\end{equation}
Furthermore, let $M_{l,n}$ be the number of blocks with frequency $l\geq1$ such that $K_{n}=\sum_{1\leq l\leq n}M_{l,n}$ and $n=\sum_{1\leq l\leq n}lM_{l,n}$. Then, Pitman \cite{Pit(06)} showed that
\begin{equation}\label{eq:asim_prior_2pd_freq}
\lim_{n\rightarrow+\infty}\frac{M_{l,n}}{n^{\alpha}}=\frac{\alpha(1-\alpha)_{(l-1)}}{l!}S_{\alpha,\theta}\qquad\text{a.s.}
\end{equation}
where $(x)_{(n)}=(x)(x+1)\cdots(x+n-1)$ denotes the rising factorial of $x$ of order $n$ with the proviso $(x)_{(0)}=1$. In contrast, for $\alpha=0$ and $\theta>0$, $K_{n}$ and $M_{l,n}$ have a different asymptotic behaviour. Specifically, Korwar and Hollander \cite{Kor(73)} and Arratia et al. \cite{Arr(92)} showed that $\lim_{n\rightarrow+\infty}K_{n}/\log n=\theta$ and $\lim_{n\rightarrow+\infty}M_{l,n}=P_{\theta/l}$ almost surely, where $P_{\theta/l}$ is distributed according to a Poisson distribution with parameter $\theta/l$. See Arratia et al. \cite{Arr(03)}, Barbour and Gnedin \cite{Bar(09)} and Schweinsberg \cite{Sch(10)} for recent generalizations and refinements of  \eqref{eq:asim_prior_2pd} and \eqref{eq:asim_prior_2pd_freq}.

Feng and Hoppe \cite{Fen(98)} further investigated the large $n$ asymptotic behaviour of the random variable $K_{n}$ and, in particular, they established a large deviation principle for $K_{n}$. Specifically, for any $\alpha\in(0,1)$ and $\theta>-\alpha$, they showed that $n^{-1}K_{n}$ satisfies a large deviation principle with speed $n$ and rate function
\begin{equation}\label{rate_prior_2pd_lamb}
I^{\alpha}(x)=\sup_{\lambda}\{\lambda x-\Lambda_{\alpha}(\lambda)\}
\end{equation}
where $\Lambda_{\alpha}(\lambda)=-\log(1-(1-e^{-\lambda})^{1/\alpha})\mathbbm{1}_{(0,+\infty)}(\lambda)$. In contrast, for $\alpha=0$ and $\theta>0$, it was shown by Feng and Hoppe \cite{Fen(98)} that  $(\log n)^{-1}K_{n}$ satisfies a large deviation principle with speed $\log n$ and rate function of the following form
 \begin{displaymath}
I_{\theta}(x)=\left\{\begin{array}{ll}
x\log \frac{x}{\theta}-x+\theta&\qquad\text{ }x>0\\[4pt]
\theta&\qquad\text{  }x=0\\[4pt]
+\infty&\qquad\text{  }x<0.
\end{array}\right.
\end{displaymath}
It is worth pointing out that the rate function \eqref{rate_prior_2pd_lamb} depends only on the parameter $\alpha$, which displays the different roles of the two parameters, $\alpha$ and $\theta$, at different scales. We refer to Feng and Hoppe \cite{Fen(98)} for an intuitive explanation in terms of a Poisson embedding scheme for the Ewens-Pitman sampling model.

In this paper we establish a large deviation principle for $M_{l,n}$. Specifically for any $\alpha\in(0,1)$ and $\theta>-\alpha$ we show that, as $n$ tends to infinity, $n^{-1}M_{l,n}$ satisfies a large deviation principle with speed $n$ and we characterize the corresponding rate function $I^{\alpha}_{l}$. We also present a conditional counterpart of this large deviation principle. To this end, with a slightly abuse of notation, we write $X\,|\,Y$ to denote a random variable whose distribution coincides with the conditional distribution of $X$ given $Y$. Moreover, let $(X_{1},\ldots,X_{n})$ be an initial sample from $\tilde{P}_{\alpha,\theta,\nu}$ featuring $K_{n}=j$ blocks with corresponding frequencies $\mathbf{N}_{n}=\mathbf{n}$, and let $(X_{n+1},\ldots,X_{n+m})$ be an additional unobserved sample. Recently, Lijoi et al. \cite{Lij(07)}, Favaro et al. \cite{Fav(09)} and Favaro et al. \cite{Fav(13)} derived and investigated the conditional distributions of the number $K_{m}^{(n)}$ of new blocks in $(X_{n+1},\ldots,X_{n+m})$ and of the number $M_{l,m}^{(n)}$ of blocks with frequency $l\geq1$ in $(X_{1},\ldots,X_{n+m})$, given $(X_{1},\ldots,X_{n})$. In particular, they showed that
\begin{equation}\label{eq:fluct_post}
\lim_{m\rightarrow+\infty}\frac{K_{m}^{(n)}}{m^{\alpha}}\,|\,(K_{n}=j,\mathbf{N}_{n}=\mathbf{n})=S_{\alpha,\theta}^{(n,j)}\qquad\text{a.s.}
\end{equation}
and 
\begin{equation}\label{eq:fluct_post_freq}
\lim_{m\rightarrow+\infty}\frac{M_{l,m}^{(n)}}{m^{\alpha}}\,|\,(K_{n}=j,\mathbf{N}_{n}=\mathbf{n})=\frac{\alpha(1-\alpha)_{(l-1)}}{l!}S_{\alpha,\theta}^{(n,j)}\qquad\text{a.s.}
\end{equation}
where $S^{(n,j)}_{\alpha,\theta}\stackrel{\text{d}}{=}B_{j+\theta/\alpha,n/\alpha-j}S_{\alpha,\theta+n}$ with $B_{j+\theta/\alpha,n/\alpha-j}$ and $S_{\alpha,\theta+n}$ being independent and distributed according to a Beta distribution with parameter $(j+\theta/\alpha,n/\alpha-j)$ and according to \eqref{eq:sdiversity} with $q=\theta+n$, respectively. Intuitively, as suggested by the fluctuations \eqref{eq:asim_prior_2pd_freq} and \eqref{eq:fluct_post_freq}, one may expect that $M_{l,n}$ and $M_{l,m}^{(n)}\,|\,(K_{n},\mathbf{N}_{n})$ have different asymptotic behaviours also in terms of large deviations, as $n$ and $m$ tend to infinity, respectively. Here we show that, for any fixed $n$ and as $m$ tends to infinity, $m^{-1}M_{l,m}^{(n)}\,|\,(K_{n},\mathbf{N}_{n})$ satisfies a large deviation principle with speed $m$ and rate function $I_{l}^{\alpha}$. In other terms, we show that there is no long lasting impact of the given initial sample to the large deviations. A similar behaviour was recently observed in Favaro and Feng \cite{Fav(14)} with respect to the large deviation principles for $K_{n}$ and $K_{m}^{(n)}\,|\,(K_{n},\mathbf{N}_{n})$.

The problem of studying conditional properties of exchangeable random partitions was first considered in Lijoi et al. \cite{Lij(08)}. Such a problem consists in evaluating, conditionally on the random partition $(K_{n},\mathbf{N}_{n})$ induced by a sample $(X_{1},\ldots,X_{n})$ from $\tilde{P}_{\alpha,\theta,\nu}$, the distribution of statistics from an additional sample $(X_{n+1},\ldots,X_{n+m})$. As observed in Lijoi et al. \cite{Lij(07)}, these statistics have direct applications in Bayesian nonparametric inference for species sampling problems arising from ecology, biology, genetic, linguistic, etc. Indeed, from a Bayesian perspective, \eqref{eq:bnpmodel} is a nonparametric model for the individuals $X_{i}$'s from a population with infinite species, where $\Pi$ is the prior distribution on the composition of such a population. The aforementioned $M_{l,m}^{(n)}$ is a representative statistic of practical interest. See, e.g., Griffiths and Span\`o \cite{Gri(07)} and  Bacallado et al. \cite{Bac(13)} for other statistics. In particular $\P[M_{l,m}^{(n)}=m_{l}\,|\,K_{n}=j,\mathbf{N}_{n}=\mathbf{n}]$ takes on the interpretation of the posterior distribution of the number of species with frequency $l$ in the enlarged sample $(X_{1},\ldots,X_{n+m})$, given $(X_{1},\ldots,X_{n})$ features $j$ species with frequencies $\mathbf{n}$. Hence $\E_{\alpha,\theta}[M_{l,m}^{(n)}\,|\,K_{n}=j,\mathbf{N}_{n}=\mathbf{n}]$ is the corresponding Bayesian nonparametric estimator under a squared loss function. In such a framework our conditional large deviation principle provides a large $m$ approximation of the estimator $\P[m^{-1}M_{l,m}^{(n)}\geq x\,|\,K_{n}=j,\mathbf{N}_{n}=\mathbf{n}]$, for any $x\geq0$. For large $m$ this is the right tail of the posterior proportion of species with frequency $l$ in the enlarged sample.

A closer inspection of the fluctuations \eqref{eq:fluct_post} and \eqref{eq:fluct_post_freq} reveals that for $l=1$ our conditional large deviation principle has a natural interpretation in the context of Bayesian nonparametric inference for discovery probabilities. Indeed, let $\P[D_{m}^{(n)}\in\cdot\,|\,K_{n}=j,\mathbf{N}_{n}=\mathbf{n}] $ be the conditional, or posterior, distribution of the probability of discovering a new species at the $(n+m+1)$-th draw, given the random partition $(K_{n},\mathbf{N}_{n})$ induced by $(X_{1},\ldots,X_{n})$. The additional sample $(X_{n+1},\ldots,X_{n+m})$ is assumed to be not observed. For large $m$, we show that $\P[D_{m}^{(n)}\in\cdot\,|\,K_{n}=j,\mathbf{N}_{n}]$ and $\P[m^{-1}M_{1,m}^{(n)}\in \cdot\,|\,K_{n}=j,\mathbf{N}_{n}=\mathbf{n}]$ are approximately equal. Accordingly our conditional large deviation principle provides a large $m$ approximation of the Bayesian nonparametric estimator $\P[D_{m}^{(n)}\geq x\,|\,K_{n}=j,\mathbf{N}_{n}=\mathbf{n}]$. Similarly, $\E_{\alpha,\theta}[m^{-1}M_{1,m}^{(n)}\,|\,K_{n}=j,\mathbf{N}_{n}=\mathbf{n}]$ provides a large $m$ approximation of the estimator of the probability of discovering a new species at the $(n+m+1)$-th draw, namely $\E_{\alpha,\theta}[D_{m}^{(n)}\,|\,K_{n}=j,\mathbf{N}_{n}=\mathbf{n}]$, which first appeared in Lijoi et al. \cite{Lij(07)}. An illustration of these asymptotic estimators is presented by using a genomic dataset. The interest in $\E_{\alpha,\theta}[D_{m}^{(n)}\,|\,K_{n}=j,\mathbf{N}_{n}=\mathbf{n}]$ and $\P[D_{m}^{(n)}\geq x\,|\,K_{n}=j,\mathbf{N}_{n}=\mathbf{n}]$, as well as in their large $m$ approximations, is related to the problem of determining the optimal sample size in species sampling problems. Indeed this problem is typically faced by setting a threshold $\tau$ for an exact or approximate mean functional of  $\P[D_{m}^{(n)}\in\cdot\,|\,K_{n}=j,\mathbf{N}_{n}=\mathbf{n}]$, and then making inference on the sample size $m$ for which this mean functional falls below, or above, $\tau$. This introduces a criterion for evaluating the effectiveness of further sampling.

The paper is structured as follows. In Section 2 we present the main result of the paper, namely the large deviation principle for $M_{l,n}$. Section 3 contains the conditional counterpart of this large deviation principle. In Section 4 we discuss potential applications of our conditional large deviation principle in the context of Bayesian nonparametric inference for species sampling problems.

%%%%%%%%%%%%%%%%
%%%%%%%%%%%%%%%%
%%%%%%%%%%%%%%%%
%%%%%%%%%%%%%%%%

\section{Large deviations for $M_{l,n}$} For any $\alpha\in(0,1)$ and $\theta>-\alpha$ the large deviation principle for $M_{l,n}$ is established through a detailed study of the moment generating function of $M_{l,n}$. This is in line with the approach originally adopted in Feng and Hoppe \cite{Fen(98)} for $K_{n}$. For any $\lambda>0$ let $y=1-e^{-\lambda}$ and 
\begin{equation}\label{eq_genfun_prior}
G_{M_{l,n}}(y;\alpha,\theta)=\E_{\alpha,\theta}\left[\left(\frac{1}{1-y}\right)^{M_{l,n}}\right]=\sum_{i\geq0}\frac{y^{i}}{i!}\mathbb{E}_{\alpha,\theta}[(M_{l,n})_{(i)}]
\end{equation}
be the moment generating function of the random variable $M_{l,n}$. Let $(y)_{[n]}=y(y-1)\cdots(y-n+1)$ be the falling factorial of $y$ of order $n$, with the proviso $(y)_{[0]}=1$. Proposition 1 in Favaro et al. \cite{Fav(13)} provides an explicit expression for $\mathbb{E}_{\alpha,\theta}[(M_{l,n})_{[r]}]$. Recalling that $(y)_{(n)}=\sum_{0\leq i\leq n}\sum_{0\leq j\leq i}|s(n,i)|S(i,j)(y)_{[j]}$, where $s$ and $S$ denote the Stirling number of the first type and the second type, an explicit expression for $\mathbb{E}_{\alpha,\theta}[(M_{l,n})_{(r)}]$ is obtained. Specifically, we have
\begin{equation}\label{prior_rising_1}
\E_{\alpha,\theta}[(M_{l,n})_{(r)}]=r!\sum_{i=0}^{r}{r-1\choose r-i}\frac{\left(\alpha\frac{(1-\alpha)_{(l-1)}}{l!}\right)^{i}\left(\frac{\theta}{\alpha}\right)_{(i)}(n)_{[il]}(\theta+i\alpha)_{(n-il)}}{i!(\theta)_{(n)}}
\end{equation}
and
\begin{equation}\label{prior_rising_2}
\E_{\alpha,0}[(M_{l,n})_{(r)}]=(r-1)!\sum_{i=0}^{r}{r\choose i}\frac{ \left(\alpha\frac{(1-\alpha)_{(l-1)}}{l!}\right)^{i}(n)_{[il]}(i\alpha)_{(n-il)}}{\alpha\Gamma(n)}, 
\end{equation}
where the sums over $i$ is nonnull for $0\leq i\leq \min(r,\lfloor{n/l\rfloor})$. In the next lemma we provide an explicit expression for the moment generating function $G_{M_{l,n}}(y;\alpha,0)$. This result follows by combining \eqref{prior_rising_2} with the series expansion on the right-hand side of \eqref{eq_genfun_prior}, and by means of standard combinatorial manipulations.

\begin{lem}\label{lemma_prior}
For any $\alpha\in(0,1)$
\begin{align}\label{mgf_prior}
&G_{M_{l,n}}(y;\alpha,0)\\
&\notag\quad=\sum_{i=0}^{\lfloor{n/l\rfloor}}\left(\frac{y}{1-y}\right)^{i} \left(\alpha\frac{(1-\alpha)_{(l-1)}}{l!}\right)^{i}\frac{n}{n-il}{n-il+i\alpha-1\choose n-il-1}.
\end{align} 
\end{lem}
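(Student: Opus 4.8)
The plan is to start from the known rising-factorial moments \eqref{prior_rising_2} and substitute them into the series defining the moment generating function in \eqref{eq_genfun_prior}, then reorganize the double sum to obtain a single power series in $y/(1-y)$ whose $i$-th coefficient collapses to the claimed expression. Concretely, I would write
\begin{equation*}
G_{M_{l,n}}(y;\alpha,0)=\sum_{r\geq0}\frac{y^{r}}{r!}\,\E_{\alpha,0}[(M_{l,n})_{(r)}]
=\sum_{r\geq0}\frac{y^{r}}{r!}(r-1)!\sum_{i=0}^{\min(r,\lfloor n/l\rfloor)}\binom{r}{i}\frac{c_{l}^{i}(n)_{[il]}(i\alpha)_{(n-il)}}{\alpha\Gamma(n)},
\end{equation*}
where I abbreviate $c_{l}=\alpha(1-\alpha)_{(l-1)}/l!$. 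Then I would swap the order of summation so that $i$ runs first (from $0$ to $\lfloor n/l\rfloor$) and $r$ runs over $r\geq i$, collecting
\begin{equation*}
G_{M_{l,n}}(y;\alpha,0)=\sum_{i=0}^{\lfloor n/l\rfloor}\frac{c_{l}^{i}(n)_{[il]}(i\alpha)_{(n-il)}}{\alpha\,\Gamma(n)\,i!}\sum_{r\geq i}\frac{(r-1)!}{(r-i)!}\,y^{r}.
\end{equation*}

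The main technical step is then to evaluate the inner sum $\sum_{r\geq i}\frac{(r-1)!}{(r-i)!}y^{r}$ in closed form. Setting $r=i+k$ with $k\geq0$, this is $y^{i}\sum_{k\geq0}\frac{(i+k-1)!}{k!}y^{k}=y^{i}(i-1)!\sum_{k\geq0}\binom{i+k-1}{k}y^{k}=y^{i}(i-1)!\,(1-y)^{-i}$ for $i\geq1$ by the negative binomial series, while for $i=0$ one checks separately that $\sum_{r\geq0}\frac{(r-1)!}{r!}y^{r}$ should be interpreted so that it contributes the constant $1$ (the $i=0,r=0$ term is the only surviving one after cancellation, since the $i=0$ contribution to $\E_{\alpha,0}[(M_{l,n})_{(r)}]$ reproduces $(n\alpha\cdot0)$-type factors correctly; in practice the cleanest route is to isolate $r=0$ from the start, noting $\E_{\alpha,0}[(M_{l,n})_{(0)}]=1$, and run the reindexing only over $i\geq1$). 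Plugging $y^{i}(i-1)!(1-y)^{-i}$ back in gives
\begin{equation*}
G_{M_{l,n}}(y;\alpha,0)=\sum_{i=0}^{\lfloor n/l\rfloor}\left(\frac{y}{1-y}\right)^{i}c_{l}^{i}\,\frac{(n)_{[il]}(i\alpha)_{(n-il)}}{\alpha\, n\,(n-il)!}\cdot\frac{(n-1)!\,n}{(n-1)!}\cdot\frac{1}{i},
\end{equation*}
after using $\Gamma(n)=(n-1)!$ and $(i-1)!/i!=1/i$; so what remains is purely bookkeeping with Gamma and Pochhammer identities.

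Finally I would reconcile the coefficient $\frac{(n)_{[il]}(i\alpha)_{(n-il)}}{\alpha\,i\,(n-1)!}$ with the target $\frac{n}{n-il}\binom{n-il+i\alpha-1}{n-il-1}$. Using $(n)_{[il]}=\frac{n!}{(n-il)!}$ and $(i\alpha)_{(n-il)}=\frac{\Gamma(n-il+i\alpha)}{\Gamma(i\alpha)}=\frac{\Gamma(n-il+i\alpha)}{(i\alpha-1)!}$, together with $\Gamma(i\alpha)=(i\alpha)^{-1}\alpha^{-1}i^{-1}\cdot(\text{stuff})$—more precisely $\frac{1}{\alpha\,i\,\Gamma(i\alpha)}=\frac{1}{\Gamma(i\alpha+1)}$ since $\alpha i\,\Gamma(i\alpha)=\Gamma(i\alpha+1)$—one obtains the coefficient as $\frac{n!}{(n-il)!}\cdot\frac{\Gamma(n-il+i\alpha)}{\Gamma(i\alpha+1)(n-1)!}=n\cdot\frac{\Gamma(n-il+i\alpha)}{\Gamma(i\alpha+1)\Gamma(n-il+1)}=\frac{n}{n-il}\cdot\frac{\Gamma(n-il+i\alpha)}{\Gamma(i\alpha+1)\Gamma(n-il)}=\frac{n}{n-il}\binom{n-il+i\alpha-1}{n-il-1}$, which is exactly \eqref{mgf_prior}. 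The only subtlety worth flagging is the convergence/interchange of the double series: since for each fixed $n$ and $l$ only finitely many $i$ contribute (namely $i\le\lfloor n/l\rfloor$), the rearrangement is legitimate on the disc $|y|<1$, i.e. for all $\lambda>0$ via $y=1-e^{-\lambda}\in(0,1)$, so no analytic delicacy arises beyond the radius-of-convergence of the negative binomial series. I expect the Stirling/Pochhammer identity juggling in the last step to be the only place where a sign or an off-by-one in the binomial coefficient could slip in, so that is where I would be most careful.
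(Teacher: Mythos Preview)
Your proposal is correct and follows exactly the route the paper indicates: the paper does not give a detailed proof of Lemma~\ref{lemma_prior} but simply says the result follows by combining \eqref{prior_rising_2} with the series expansion \eqref{eq_genfun_prior} ``by means of standard combinatorial manipulations,'' which is precisely what you carry out. Your handling of the $r=0$/$i=0$ boundary case is the right one (isolate $r=0$, note that the $i=0$ summand in \eqref{prior_rising_2} vanishes for $r\geq1$ because $(0)_{(n)}=0$, and check that the constant $1$ matches the $i=0$ term of \eqref{mgf_prior}); the Pochhammer/Gamma bookkeeping in your final paragraph is also correct.
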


In the next theorem, which is the main result of the paper, we exploit \eqref{mgf_prior} and \eqref{prior_rising_1} in order to establish the large deviation principle for $M_{l,n}$. The proof of this theorem is split into three main parts. The first two parts deal with the large deviation principle for $M_{l,n}$ under the assumption $\alpha\in(0,1)$ and $\theta=0$, whereas the third part deals with the general case $\alpha\in(0,1)$ and $\theta>-\alpha$.

\begin{thm}\label{teorema_prior}
For any $\alpha\in(0,1)$ and $\theta>-\alpha$, as $n$ tends to infinity, $n^{-1}M_{l,n}$ satisfies a large deviation principle with speed $n$ and rate function $I_{l}^{\alpha}(x)=\sup_{\lambda}\{\lambda x-\Lambda_{\alpha,l}(\lambda) \}$ where $\Lambda_{\alpha,l}$ is specified in \eqref{s-ldp-eq1}. In particular, for almost all $x>0$ 
\begin{displaymath}
\lim_{n\rightarrow+\infty}\frac{1}{n}\log\mathbb{P}\left[\frac{M_{l,n}}{n}> x\right]=-I^{\alpha}_{l}(x)
\end{displaymath}
where $I^{\alpha}_{l}(0)=0$ and $I^{\alpha}_{l}(x)<+\infty$ for $x\in(0,1/l]$. Moreover $I^{\alpha}_{l}(x)=+\infty$ for $x\notin[0,1/l]$.
\end{thm}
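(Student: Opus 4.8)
The plan is to derive the large deviation principle from the Gärtner--Ellis theorem, so the heart of the matter is to evaluate the limiting scaled cumulant generating function $\Lambda_{\alpha,l}(\lambda)=\lim_{n\to\infty}n^{-1}\log\E_{\alpha,\theta}[e^{\lambda M_{l,n}}]$ and to check that it is essentially smooth. I would treat $\theta=0$ first. Putting $y=1-e^{-\lambda}$ in \eqref{eq_genfun_prior} gives $G_{M_{l,n}}(y;\alpha,0)=\E_{\alpha,0}[e^{\lambda M_{l,n}}]$, and since $y/(1-y)=e^{\lambda}-1$, Lemma \ref{lemma_prior} rewrites this as a finite sum of nonnegative terms,
\begin{equation*}
\E_{\alpha,0}[e^{\lambda M_{l,n}}]=\sum_{i=0}^{\lfloor n/l\rfloor}(e^{\lambda}-1)^{i}\Bigl(\alpha\tfrac{(1-\alpha)_{(l-1)}}{l!}\Bigr)^{i}\frac{n}{n-il}\binom{n-il+i\alpha-1}{n-il-1}.
\end{equation*}
Writing the binomial coefficient as a ratio of Gamma functions, setting $i=xn$ with $x\in[0,1/l]$ and applying Stirling's formula, the $n\log n$ contributions cancel and the $i$-th summand behaves like $\exp\{n\phi_{\lambda}(i/n)+o(n)\}$, where
\begin{equation*}
\phi_{\lambda}(x)=x\log\!\Bigl((e^{\lambda}-1)\,\alpha\tfrac{(1-\alpha)_{(l-1)}}{l!}\Bigr)+(1-(l-\alpha)x)\log(1-(l-\alpha)x)-(1-lx)\log(1-lx)-\alpha x\log(\alpha x),
\end{equation*}
with the convention $0\log 0=0$. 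Sandwiching the sum between its largest term and $(\lfloor n/l\rfloor+1)$ copies of it then gives, for $\lambda>0$, the function $\Lambda_{\alpha,l}(\lambda)=\sup_{x\in[0,1/l]}\phi_{\lambda}(x)$ appearing in the theorem, while for $\lambda\le0$ the elementary bounds $e^{\lambda n/l}\le\E_{\alpha,0}[e^{\lambda M_{l,n}}]\le1$ together with $M_{l,n}/n\to0$ force $\Lambda_{\alpha,l}(\lambda)=0$.

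Next I would assemble the convex-analytic input. A direct computation shows $\phi_{\lambda}''<0$ on $(0,1/l)$: the only positive piece, $(l-\alpha)^{2}(1-(l-\alpha)x)^{-1}$, is dominated by $l^{2}(1-lx)^{-1}$ because $a\mapsto a^{2}(1-ax)^{-1}$ is increasing for $ax<1$, and there is the further negative term $-\alpha/x$. Hence $\sup_{x}\phi_{\lambda}$ is attained at the unique zero $x^{\ast}(\lambda)\in(0,1/l)$ of $\phi_{\lambda}'$, i.e.\ of $(e^{\lambda}-1)\alpha\frac{(1-\alpha)_{(l-1)}}{l!}=(\alpha x)^{\alpha}(1-(l-\alpha)x)^{l-\alpha}(1-lx)^{-l}$, and substituting this back collapses $\phi_{\lambda}(x^{\ast})$ to $\log\frac{1-(l-\alpha)x^{\ast}}{1-lx^{\ast}}$. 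By the implicit function theorem $\Lambda_{\alpha,l}$ is smooth on $(0,\infty)$ with $\Lambda_{\alpha,l}'(\lambda)=x^{\ast}(\lambda)e^{\lambda}/(e^{\lambda}-1)$; since $x^{\ast}(\lambda)\sim\alpha^{-1}\bigl((e^{\lambda}-1)\alpha\tfrac{(1-\alpha)_{(l-1)}}{l!}\bigr)^{1/\alpha}$ as $\lambda\downarrow0$ and $1/\alpha>1$, we get $\Lambda_{\alpha,l}'(0^{+})=0=\Lambda_{\alpha,l}'(0^{-})$, while $x^{\ast}(\lambda)\uparrow1/l$ and $\Lambda_{\alpha,l}'(\lambda)\uparrow1/l$ as $\lambda\uparrow\infty$. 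So $\Lambda_{\alpha,l}$ is finite, convex and differentiable on $\R$, hence essentially smooth, and the Gärtner--Ellis theorem yields the large deviation principle for $n^{-1}M_{l,n}$ with good rate function $I^{\alpha}_{l}=\Lambda_{\alpha,l}^{\ast}$. The listed properties then follow: $I^{\alpha}_{l}(0)=-\inf_{\lambda}\Lambda_{\alpha,l}(\lambda)=0$; for $x\notin[0,1/l]$, letting $\lambda\to+\infty$ (using $\Lambda_{\alpha,l}(\lambda)/\lambda\to1/l$) or $\lambda\to-\infty$ (using $\Lambda_{\alpha,l}\equiv0$ there) gives $I^{\alpha}_{l}(x)=+\infty$; and for $x\in(0,1/l]$ the bound $\Lambda_{\alpha,l}(\lambda)\ge\phi_{\lambda}(1/l)\ge l^{-1}\bigl(\lambda-\log2+\log(\alpha\tfrac{(1-\alpha)_{(l-1)}}{l!})\bigr)$ keeps $\lambda x-\Lambda_{\alpha,l}(\lambda)$ bounded above, so $I^{\alpha}_{l}(x)<+\infty$. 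The displayed tail limit follows by applying the large deviation upper bound to $[x,\infty)$ and the lower bound to $(x,\infty)$: these match at every continuity point of the nondecreasing convex function $I^{\alpha}_{l}$, i.e.\ at every $x\in(0,1/l)$, and $x=1/l$ is the only exclusion because $\P[M_{l,n}/n>1/l]=0$ while $I^{\alpha}_{l}(1/l)<+\infty$.

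For general $\theta>-\alpha$ I would expand $e^{\lambda M_{l,n}}$ in rising factorials, insert \eqref{prior_rising_1}, interchange the two sums (finite after the cancellation $(n)_{[il]}=0$ for $il>n$) and sum the resulting geometric-type series, obtaining for $\lambda>0$
\begin{equation*}
\E_{\alpha,\theta}[e^{\lambda M_{l,n}}]=\sum_{i=0}^{\lfloor n/l\rfloor}(e^{\lambda}-1)^{i}\Bigl(\alpha\tfrac{(1-\alpha)_{(l-1)}}{l!}\Bigr)^{i}\frac{(\theta/\alpha)_{(i)}\,(n)_{[il]}\,(\theta+i\alpha)_{(n-il)}}{i!\,(\theta)_{(n)}},
\end{equation*}
again a sum of nonnegative terms, since the possibly negative $(\theta/\alpha)_{(i)}$ and $(\theta)_{(n)}$ enter only through their ratio. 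Running the same Stirling analysis, each $\theta$-dependent factor contributes only $O(\log n)$, the $n\log n$ terms cancel once more, and the $i$-th summand is again $\exp\{n\phi_{\lambda}(i/n)+o(n)\}$; hence $\Lambda_{\alpha,l}$, and therefore the whole large deviation principle and its rate function, are unchanged. (Equivalently, the Radon--Nikodym density between the $(\alpha,\theta)$- and $(\alpha,0)$-partition laws is a function of $(n,K_{n})$ whose logarithm is $O(\log n)$ uniformly in $K_{n}$, hence invisible at speed $n$.)

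The main obstacle is the first step: making ``the sum is comparable to its largest term'' rigorous needs a Stirling estimate uniform in the summation index over the entire range $0\le i\le\lfloor n/l\rfloor$, in particular in the two boundary regimes ($i$ bounded, and $n-il$ bounded) where some Gamma arguments stay bounded; there one checks separately that $n^{-1}\log T_{i}\to\phi_{\lambda}(i/n)$. Everything afterwards --- strict concavity of $\phi_{\lambda}$, essential smoothness of $\Lambda_{\alpha,l}$ (notably the $C^{1}$ matching at $\lambda=0$), and the passage to $I^{\alpha}_{l}$ by Legendre duality --- is routine.
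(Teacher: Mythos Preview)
Your proposal is correct and follows essentially the same route as the paper: compute $\Lambda_{\alpha,l}$ for $\theta=0$ from Lemma~\ref{lemma_prior} via Stirling and a Laplace/maximum-term argument, identify the unique maximiser by strict concavity, verify differentiability at $\lambda=0$ to invoke G\"artner--Ellis, read off the stated properties of $I^{\alpha}_{l}$, and then pass to general $\theta>-\alpha$ by a term-by-term comparison showing the $\theta$-dependent factors are polynomially bounded in $n$ (the paper packages this as the factor $D(\alpha,\theta,n,i)$ with $d_{3}n^{-2}\le D\le d_{4}n^{k}$). The only cosmetic differences are that the paper records $\Lambda_{\alpha,l}$ through the auxiliary functions $h_{1},h_{2}$ in \eqref{eq:funch1}--\eqref{eq:funch2} and handles the boundary ranges of $i$ (namely $\alpha i<1$ and $0\le n-il<1$) explicitly, which is exactly the uniformity issue you flag as the ``main obstacle''; one small slip in your write-up is that the bound $e^{\lambda n/l}\le\E[e^{\lambda M_{l,n}}]$ for $\lambda\le0$ only yields $\liminf\ge\lambda/l$, so you should instead invoke Jensen together with $\E[M_{l,n}]/n\to0$ to get $\Lambda_{\alpha,l}(\lambda)=0$ there.
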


\begin{proof}
In the first part of the proof we show that, assuming $\alpha\in(0,1)$ and $\theta=0$, $n^{-1}M_{l,n}$ satisfies a large deviation principle with speed $n$ and we characterize the corresponding rate function $I_{l}^{\alpha}$. For large $n$, by means of \eqref{prior_rising_2} we have
\begin{equation*}\label{s-eq6}
\E_{\alpha,0}[M_{l,n}]=\frac{\alpha (1-\alpha)_{(l-1)}}{\alpha\Gamma(n)l!}(n)_{[l]}(\alpha)_{(n-l)}\approx  n^{\alpha}
\end{equation*}
and
\begin{equation*}\label{s-eq7}
G_{M_{l,n}}(y;\alpha,0)=\sum_{i=1}^{\lfloor n/l\rfloor}\tilde{y}^i\frac{n}{n-il}{n-il+\alpha i-1\choose n-il-1}\notag
\end{equation*}  
where $\tilde{y}= \alpha y (1-\alpha)_{(l-1)}/(1-y)l!$. If $n/l$ is an integer, then the final term in the above expression is $n\tilde{y}^{n/l}$. By direct calculation we have that $\lim_{n\rightarrow +\infty}n^{-1}\log \E_{\alpha,0}[e^{\lambda M_{l,n}}]=0$ for any $\lambda \leq 0 $. Also, for any $\lambda >0$ and $y=1-e^{-\lambda}$,
\begin{align*}
&\lim_{n\rightarrow +\infty}\frac{1}{n}\log G_{M_{l,n}}(y;\alpha,0)\\
&\quad=  \lim_{n\rightarrow +\infty}\frac{1}{n}\log \max \left\{\tilde{y}^i {n-il+\alpha i-1\choose n-il-1}\text{; }i=0, \ldots, \frac{n}{l}\right\}\notag\\
&\quad=\lim_{n\rightarrow +\infty} \max \left\{\frac{1}{n}\log\tilde{y}^i {n-il+\alpha i-1\choose n-il-1}\text{; }i =0, \ldots, \frac{n}{l}\right\}.\notag
\end{align*} 
For  $\alpha i < 1$, it is clear that $\lim_{n\rightarrow +\infty}n^{-1}\log \tilde{y}^i {n-(l-\alpha)i-1\choose n-li-1} =0$. For $i$ satisfying $0\leq n-il <1$, there are two possibilities: either $n=il$ or $i=\lfloor n/l\rfloor < n/l$. In both cases $\lim_{n\rightarrow +\infty}n^{-1}\log\tilde{y}^i {n-(l-\alpha)i-1\choose n-li-1} =l^{-1}\log \tilde{y}$. Next we consider the case for which $i$ satisfies  $n-li \geq 1$ and $\alpha i \geq 1$. For $0<\epsilon<1/l $, set $\phi(\epsilon)= \epsilon\log\epsilon$ and
$ \varphi(\epsilon)=\phi(1-(l-\alpha)\epsilon)-\phi(1-l\epsilon)-\phi(\alpha\epsilon) +\epsilon\log \tilde{y}$. Using $\Gamma(z)=\sqrt{2\pi}z^{z-1/2}e^{-z}[1+r(z)]$, where we set $|r(z)| \leq e^{1/12z}-1$ for any $z>0$, we have
\begin{align*}
&{n-il+\alpha i-1\choose n-il-1}\\
&\quad=\frac{\Gamma(n-(l-\alpha)i)}{\alpha i\Gamma(n-li)\Gamma(\alpha i)}\\
&\quad= \frac{(1+r(n-(l-\alpha)i))e}{\sqrt{2\pi}(1+r(n-li))(1+r(\alpha i))}\left(\frac{(n-li)}{\alpha i(n-(l-\alpha)i)}\right)^{1/2}\\
&\quad\quad \times \frac{(1-(l-\alpha)i/n)^{n-(l-\alpha)i}}{(1-li/n)^{n-li}(\alpha i/n)^{\alpha i}}\\
&\quad= \frac{(1+r(n-(l-\alpha)i))e}{\sqrt{2\pi}(1+r(n-li))(1+r(\alpha i))}\left(\frac{(n-li)(\alpha i+1)}{(n-(l-\alpha)i)}\right)^{1/2}\\
&\quad\quad \times \exp\left\{n \left[\phi\left(1-(l-\alpha)\frac{i}{n}\right)-\phi\left(1-l\frac{i}{n}\right)-\phi\left(\alpha \frac{i}{n}\right)\right]\right\}.
\end{align*}
The fact that $\alpha^{-1}\leq i \leq (n-1)/l$ implies that $(1+r(n-(l-\alpha)i))e/\sqrt{2\pi}(1+r(n-li))(1+r(\alpha i))$ is uniformly bounded  from above by a constant $d_1>0$. Hence,
\begin{align}\label{eq:teo1_1}
&{n-il+\alpha i-1\choose n-il-1}\\
&\notag\quad \leq d_1 \sqrt{n}  \exp\left\{ n \left[\phi\left(1-(l-\alpha)\frac{i}{n}\right)-\phi\left(1-l\frac{i}{n}\right)-\phi\left(\alpha \frac{i}{n}\right)\right]\right\},
\end{align}
and 
\begin{align}\label{eq:teo1_2}
&\lim_{n\rightarrow+\infty} \max \left\{\frac{1}{n}\log\tilde{y}^i {n-il+\alpha i-1\choose n-il-1}\text{; }\frac{1}{\alpha}\leq i \leq \frac{(n-1)}{l}\right\}\\
&\notag\quad\leq \max\left\{\varphi(\epsilon): 0<\epsilon <\frac{1}{l}\right\}.
\end{align}
In particular, by combining the inequalities stated in \eqref{eq:teo1_1} and \eqref{eq:teo1_2}, respectively, we have
\begin{align*} 
&\lim_{n\rightarrow +\infty}\frac{1}{n}\log G_{M_{l,n}}(y;\alpha,0)\notag\\
&\quad=  \lim_{n\rightarrow+ \infty}\frac{1}{n}\log \max \left\{\tilde{y}^i {n-il+\alpha i-1\choose n-il-1}\text{; }i =0, \ldots, \frac{n}{l}\right\}\notag\\
&\quad =\max \left\{\max \left\{\lim_{n\rightarrow+ \infty}\frac{1}{n}\log\tilde{y}^i {n-il+\alpha i-1\choose n-il-1}\text{; }i <\frac{1}{\alpha}\right\}\right.,\\
&\quad\quad\quad\quad \left.\max \left\{\lim_{n\rightarrow+ \infty}\frac{1}{n}\log\tilde{y}^i {n-il+\alpha i-1\choose n-il-1}\text{; }\frac{1}{\alpha}\leq i\leq \frac{n-1}{l}\right\}\text{, } \frac{1}{l}\log\tilde{y}\right\}\notag\\
&\quad= \max\left\{0, \max\left\{\varphi(\epsilon):0<\epsilon<\frac{1}{l}\right\}\text{, } \frac{1}{l}\log\tilde{y}\right\}\notag\\
&\quad \leq \max\left\{\varphi(\epsilon): 0<  \epsilon < \frac{1}{l}\right\}.\notag
\end{align*}
On the other hand, for any $\epsilon$ in $(0,1/l)$, there exists a sequence $(i_n)_{n\geq1}$ such that $(i_n/n)_{n\geq1}$ converges to $\epsilon$ as $n$ tends to infinity. For this particular sequence
\begin{align*}\label{s-eq9}
\varphi(\epsilon)&= \lim_{n\rightarrow +\infty}\frac{1}{n}\log\tilde{y}^{i_n} {n-i_nl+\alpha i_n-1\choose n-i_nl-1}\\
&\notag\leq \lim_{n\rightarrow +\infty}\frac{1}{n}\log G_{M_{l,n}}(y;\alpha,0).
\end{align*}
Thus
\begin{equation*}\label{s-eq10}
\lim_{n\rightarrow +\infty}\frac{1}{n}\log G_{M_{l,n}}(y;0,\alpha)=\max\left\{\varphi(\epsilon): 0\leq \epsilon \leq\frac{1}{l}\right\}.
\end{equation*}
 Noting that $$\varphi'(\epsilon)=-(l-\alpha)\log (1-(l-\alpha)\epsilon) +l\log(1-l\epsilon)-\alpha\log\alpha\epsilon  +\log\tilde{x},$$ one has $$\varphi(\epsilon)= \log(1-(l-\alpha)\epsilon)-\log(1-l\epsilon)+\varphi'(\epsilon)\epsilon.$$ Moreover, since $\varphi'(0+)=+\infty$ and $\varphi'(1/l-)=-\infty$, then the function $\varphi(\epsilon)$ reaches a maximum at a point $\epsilon_0$ in $(0,1/l)$ where $\varphi'(\epsilon_0)=0$.  Clearly $\epsilon_0$ depends on $\alpha$, $l$ and $\lambda$. Moreover note that $\varphi''(\epsilon)= -\alpha/\epsilon(1-(l-\alpha)\epsilon)(1-l\epsilon)<0$, which implies that $\epsilon_0(\lambda)$ is unique and $\Lambda_{\alpha,l}(\lambda)=   \log[1+\alpha\epsilon_0/(1-l\epsilon_0)]$. In particular, since
\begin{displaymath}
\log \tilde{x}=\lambda +\log\frac{e^{\lambda}-1}{e^{\lambda}}+\log \frac{\alpha (1-\alpha)_{(l-1)}}{l!}
\end{displaymath}
and $\varphi'(\epsilon_0)=-(l-\alpha)\log (1-(l-\alpha)\epsilon_0) +l\log(1-l\epsilon_0)-\alpha\log\alpha\epsilon_0  +\log\tilde{x}=0$, one has
\begin{align*} \label{s-eq12}
&\lambda +\log\frac{e^{\lambda}-1}{e^{\lambda}}+\log \frac{\alpha (1-\alpha)_{(l-1)}}{\alpha^{\alpha}l!}\\
&\quad= l\log\frac{1-(l-\alpha)\epsilon_0}{1-l\epsilon_0} +\alpha \log\frac{\epsilon_0}{1-(l-\alpha)\epsilon_0}.
\end{align*}
Set
\begin{equation}\label{eq:funch1}
h_1(\lambda)=\lambda +\log\frac{e^{\lambda}-1}{e^{\lambda}}+\log \frac{\alpha (1-\alpha)_{(l-1)}}{\alpha^{\alpha}l!}
\end{equation}
and
\begin{equation}\label{eq:funch2}
h_2(\epsilon_0)=  l\log\frac{1-(l-\alpha)\epsilon_0}{1-l\epsilon_0} +\alpha \log\frac{\epsilon_0}{1-(l-\alpha)\epsilon_0}.
\end{equation}
Note that, since both the functions $h_1$ and $h_2$ are strictly increasing functions with differentiable inverses, then $\epsilon_0 = h_2^{-1}\circ h_1(\lambda)$ is a differentiable  strictly increasing function and, in particular, we have $\lim_{\lambda \rightarrow 0}\epsilon_0=0$ and $\lim_{\lambda\rightarrow +\infty}\epsilon_0 =1/l$. Now, if we set $\Lambda_{\alpha,l}(\lambda)$ to be zero for nonpositive $\lambda$, and for $\lambda>0$
\begin{equation}\label{s-ldp-eq1}
\Lambda_{\alpha,l}(\lambda)= \log\left(1+\frac{\alpha h_2^{-1}\circ h_1(\lambda)}{1-l h_2^{-1}\circ h_1(\lambda)}\right),
\end{equation}
then it is clear that $\{\lambda: \Lambda_{\alpha,l}(\lambda)<+ \infty\}=\mathbb{R}$ and $\Lambda_{\alpha,l}(\lambda)$ is differentiable for $\lambda \neq 0$. The left derivative of $\Lambda_{\alpha,l}(\lambda)$ at zero is clearly zero. On the other hand, for $\lambda >0$
\begin{displaymath} 
\frac{d\Lambda_{\alpha,l}(\lambda)}{d \lambda}= \bigg[ \frac{\alpha-l}{1+(\alpha-l)\epsilon_0}+\frac{l}{1-l\epsilon_0}\bigg]\frac{d\epsilon_0}{d\lambda}.
\end{displaymath}
Since $\epsilon_0$ converges to zero  it follows from direct calculation that, as  $\lambda \downarrow 0$ one has
\begin{displaymath}
\frac{d\epsilon_0}{d\lambda}=\frac{(e^{h_1(\lambda)})'}{(e^{h_2(\epsilon)})'|_{\epsilon=\epsilon_0}}\rightarrow 0.
\end{displaymath}
Accordingly $\Lambda_{\alpha,l}(\lambda)$ is differentiable everywhere. By the G\"artner-Ellis theorem (see Dembo and Zeitouni \cite{DZ98} for details), a large deviation principle holds for $n^{-1}M_{l,n}$ on space $\mathbb{R}$ as $n$ tends to infinity  with speed $n$ and good rate function $I^{\alpha}_{l}(x)=\sup_{\lambda}\{\lambda x-\Lambda_{\alpha,l}(\lambda) \}$. This completes the first part of the proof. In the second part of the proof we further specify the rate function $I^{\alpha}_{l}$. In particular,  let us rewrite $\Lambda_{\alpha,l}(\lambda)$ as $\Lambda_{\alpha,l}(\lambda)=  \lambda/l+\tilde{\Lambda}_{\alpha,l}(\lambda)$, where we defined
\begin{displaymath}
\tilde{\Lambda}_{\alpha,l}(\lambda)=-\lambda/l,
\end{displaymath}
for $\lambda\leq 0$, and 
\begin{displaymath}
\tilde{\Lambda}_{\alpha,l}(\lambda)=\frac{1}{l}\log\frac{e^{\lambda}-1}{e^{\lambda}}+\frac{1}{l} \log \frac{\alpha (1-\alpha)_{(l-1)}}{\alpha^{\alpha}l!}-\frac{\alpha}{l} \log\frac{\epsilon_0}{1-(l-\alpha)\epsilon_0}
\end{displaymath}
for $\lambda>0$. Since there exists a strictly positive constant $d_2>0$ such that $\epsilon_0 \geq d_2$ for $\lambda \geq 1$, then $\tilde{\Lambda}_{\alpha,l}$ is uniformly bounded for $\lambda\geq 1$. This implies that the rate function $I_l^{\alpha}(x)=\sup_{\lambda}\{\lambda \left(x-1/l\right)-\tilde{\Lambda}_{\alpha,l}(\lambda)\}$ is infinity for any $x>1/l$, which is consistent with the fact that $n^{-1}M_{l,n}\leq 1/l$. Additionally we have
\begin{equation}\label{rate_prec_1}
I_{l}^{\alpha}(x)=\left\{\begin{array}{ll}
0&\hspace{0.4cm}\text{ if }x=0\\[4pt]
<+\infty&\hspace{0.4cm}\text{ if } x\in(0,1/l]\\[4pt]
+\infty&\hspace{0.4cm}\text{ otherwise }.
\end{array}\right.
\end{equation}
For this to hold, we need to verify  that $I_l^{\alpha}(x)$ is finite for $x$ in $(0,1/l]$.  By definition, 
\begin{equation}\label{s-eq13a}
\sup_{0\leq \lambda \leq 1}\{\lambda x -\Lambda(\lambda)\} \leq  \sup_{0\leq \lambda \leq 1}\{\lambda x\} =x<+\infty
\end{equation}
for any $x$ in $(0,1/l]$.
For any $\lambda\geq 1$, let $d_2$ be the value of $\epsilon_0$ at $\lambda=1$. Then $\epsilon_0 \geq d_2$ for any $\lambda\geq 1$ and this implies that  $\tilde{\Lambda}(\lambda)$ is bounded for all $\lambda \geq 1$. Accordingly, we can write $\sup_{\lambda \geq 1}\{\lambda\left(y-1/l\right)-\tilde{\Lambda}_{\alpha,l}(\lambda)\} \leq  \sup_{\lambda \geq 1}\{|\tilde{\Lambda}_{\alpha,l}(\lambda)|\}<+\infty$, which combined with \eqref{s-eq13a} implies \eqref{rate_prec_1}. This completes the second part of the proof. Finally, in the third part of the proof we extend the large deviation principle to the case $\alpha\in(0,1)$ and $\theta>-\alpha$. By combining the definition \eqref{eq_genfun_prior} with \eqref{prior_rising_1}, and by means of standard combinatorial manipulations, one has
\begin{align*}
&G_{M_{l,n}}(y;\alpha,\theta)\label{s-may23-eq1}\\
&\quad=\sum_{i=0}^{\lfloor{n/l\rfloor}} D(\alpha,\theta,n,i) \left(y \alpha\frac{(1-\alpha)_{(1-y)(l-1)}}{l!}\right)^{i}
\frac{n}{(n-il)}{n-il+i\alpha-1\choose n-il-1}, \notag
\end{align*}
where the function $D$ is such that $D(\alpha,\theta,n,0)=1$ and, for any $1\leq i \leq\lfloor n/l \rfloor$,
 \begin{equation*}\label{s-may27-eq6} 
 D(\alpha,\theta,n,i)= \frac{\Gamma(n)}{(\theta+1)_{(n-1)}}\frac{(\theta/\alpha+1)_{(i-1)}}{\Gamma(i)}\frac{(\theta+i\alpha)_{(n-il)}}{(i\alpha)_{(n-il)}}.
  \end{equation*}
Since $\theta/\alpha >-1$, it follows from basic algebra that one can find positive constants, say $d_3$ and $d_4$, that are independent of $n$ and $i$ and such that it follows
 \begin{equation}\label{s-may27-eq6}
 d_3 n^{-2} \leq D(\alpha,\theta,n,i) \leq d_4 n^{k}
   \end{equation}
 where $k$ is the smallest integer greater than $1+|\theta|+|\theta/\alpha|$. Accordingly, we have
\begin{align*}
&\lim_{n \rightarrow +\infty}\frac{1}{n}\log \E_{\alpha,\theta}[e^{\lambda M_{l,n}}]\\
&\quad= \lim_{n \rightarrow +\infty}\frac{1}{n}\log G_{M_{l,n}}(y;\alpha,\theta)\label{s-may23-eq2}\\
&\quad= \lim_{n \rightarrow +\infty}\frac{1}{n}\log G_{M_{l,n}}(y;\alpha,0)\notag\\
&\quad=\lim_{n \rightarrow +\infty}\frac{1}{n}\log \E_{\alpha,0}[e^{\lambda M_{l,n}}]=\Lambda_{\alpha,l}(\lambda).
\end{align*}
Then, for any $\alpha\in(0,1)$ and $\theta>-\alpha$, $n^{-1}M_{l,n}$ satisfies a large deviation principle with speed $n$ and rate function $I^{\alpha}_{l}$. This completes the third part of the proof.
\end{proof}

In general it is difficult to get a more explicit expression for $I_{l}^{\alpha}$. Indeed, $\Lambda_{\alpha,l}$ depends on $\lambda$ in an implicit form, namely $\Lambda_{\alpha,l}$ is a function of $\lambda$ in terms of $h_2^{-1}\circ h_1(\lambda)$, where where $h_{1}$ and $h_{2}$ are in \eqref{eq:funch1} and  \eqref{eq:funch2} respectively. However, under the assumption $\alpha=1/2$ and $l=1$, an explicit expression for $I^{\alpha}_{l}$ can be derived. For any $\alpha\in(0,1)$ and $\theta>-\alpha$, the rate function $I^{\alpha}_{l}$ displayed in \eqref{s-ldp-eq1} can be easily evaluated by means of standard numerical techniques.   

\begin{prp}\label{proposition_prior} 
Let $B_1$ be the function specified in  \eqref{s-eq17}. Then, for any $x\in [0, 1]$
\begin{displaymath}
I_{1}^{1/2}(x)=x\log [B_1(x)+1]+\log 2 -\log \left(1+\sqrt{B^2_1(x)+1}\right).
\end{displaymath}
\end{prp}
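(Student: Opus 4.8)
The plan is to start from the variational formula $I_1^{1/2}(x)=\sup_\lambda\{\lambda x-\Lambda_{1/2,1}(\lambda)\}$ already furnished by Theorem~\ref{teorema_prior}, so that everything reduces to (i) writing $\Lambda_{1/2,1}$ in closed form and (ii) evaluating its Legendre transform. For (i) I would specialize \eqref{eq:funch1} and \eqref{eq:funch2} to $\alpha=1/2$ and $l=1$: since $(1-\alpha)_{(l-1)}=1$, $l!=1$ and $\alpha^{\alpha}=2^{-1/2}$, one gets $h_1(\lambda)=\lambda+\log(1-e^{-\lambda})-\tfrac12\log 2$ and $h_2(\epsilon)=\log\frac{1-\epsilon/2}{1-\epsilon}+\tfrac12\log\frac{\epsilon}{1-\epsilon/2}$. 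The defining relation $h_2(\epsilon_0)=h_1(\lambda)$, only implicit in general, becomes algebraic here after exponentiating: setting $u=1-\epsilon_0$ and using $\epsilon_0(2-\epsilon_0)=1-u^2$ it collapses to $\sqrt{1-u^2}/u=e^{\lambda}-1$, hence $u=(1+(e^{\lambda}-1)^2)^{-1/2}$ and $\epsilon_0=1-(1+(e^{\lambda}-1)^2)^{-1/2}$. Feeding this into \eqref{s-ldp-eq1}, which for $\alpha=1/2$, $l=1$ reads $\Lambda_{1/2,1}(\lambda)=\log\frac{1-\epsilon_0/2}{1-\epsilon_0}=\log\frac{1+u}{2u}$, I obtain for $\lambda>0$
\[
\Lambda_{1/2,1}(\lambda)=\log\frac{1+\sqrt{1+(e^{\lambda}-1)^2}}{2},
\]
with $\Lambda_{1/2,1}(\lambda)=0$ for $\lambda\le 0$.

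For (ii) I would use that $\Lambda_{1/2,1}$ is convex and $C^1$ with $\Lambda_{1/2,1}'(0)=0$ and $\Lambda_{1/2,1}'(\lambda)\uparrow 1$ as $\lambda\to+\infty$ (the latter from $\Lambda_{1/2,1}(\lambda)=\lambda-\log 2+o(1)$). Consequently, for $x\in(0,1)$ the supremum defining $I_1^{1/2}(x)$ is attained at the unique $\lambda^{\ast}=\lambda^{\ast}(x)>0$ with $\Lambda_{1/2,1}'(\lambda^{\ast})=x$; differentiating the displayed formula and putting $t=e^{\lambda^{\ast}}-1$ turns this stationarity condition into $x=t(t+1)/[\sqrt{1+t^2}(1+\sqrt{1+t^2})]$, i.e.\ exactly the equation \eqref{s-eq17} that defines $B_1(x)$. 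Substituting $\lambda^{\ast}=\log(B_1(x)+1)$ back into $\lambda^{\ast}x-\Lambda_{1/2,1}(\lambda^{\ast})$ and using the closed form of $\Lambda_{1/2,1}$ produces $I_1^{1/2}(x)=x\log[B_1(x)+1]+\log 2-\log(1+\sqrt{B_1^2(x)+1})$. The endpoints are handled separately: at $x=0$ the supremum is $0$, attained at $\lambda=0$, and the formula returns $0$ because $B_1(0)=0$; at $x=1$ one checks $\Lambda_{1/2,1}'<1$ everywhere, so $\lambda\mapsto\lambda-\Lambda_{1/2,1}(\lambda)$ is strictly increasing and $I_1^{1/2}(1)=\lim_{\lambda\to+\infty}(\lambda-\Lambda_{1/2,1}(\lambda))=\log 2$, which agrees with the formula as $B_1(x)\to+\infty$ when $x\uparrow 1$.

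The one genuinely delicate step is the closed-form solution of $h_2(\epsilon_0)=h_1(\lambda)$: it works only because $\alpha=1/2$ makes the exponent on the ratio in $h_2$ together with the identity $2\epsilon_0-\epsilon_0^2=1-(1-\epsilon_0)^2$ combine into a single square root; for general $\alpha$ no such collapse occurs, which is precisely why Theorem~\ref{teorema_prior} can only describe $\Lambda_{\alpha,l}$ implicitly. Everything downstream — differentiating $\Lambda_{1/2,1}$, matching the stationarity equation with \eqref{s-eq17}, and the two boundary evaluations — is routine bookkeeping.
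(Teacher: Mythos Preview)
Your approach is essentially the paper's: specialize $h_1,h_2$ to $\alpha=1/2$, $l=1$, solve $h_2(\epsilon_0)=h_1(\lambda)$ explicitly via the substitution $u=1-\epsilon_0$, obtain $\Lambda_{1/2,1}(\lambda)=\log\tfrac{1}{2}\bigl(1+\sqrt{1+(e^\lambda-1)^2}\bigr)$, and then compute the Legendre transform by locating the critical point. The endpoint evaluations at $x=0$ and $x=1$ also match the paper's.

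The one genuine gap is your identification of the stationarity condition $x=t(t+1)/\bigl[\sqrt{1+t^2}\,(1+\sqrt{1+t^2})\bigr]$ with ``the equation \eqref{s-eq17} that defines $B_1(x)$''. Equation \eqref{s-eq17} is not that implicit relation; it is the explicit Vi\`ete trigonometric formula for a root of a cubic. What is still owed is the bridge: rationalizing your stationarity condition yields the cubic $(1-x)^2t^3+2(1-x)t^2+(1-x)^2t-2x=0$ (this is \eqref{s-eq16} in the paper), and one must then argue that for $x\in(0,1)$ this cubic has exactly one positive root --- the paper checks the discriminant is positive and uses $G(0)=-2x<0$ together with the location of the critical points of $G$ --- and that \eqref{s-eq17} picks out precisely that root after the depression $C=t+2/(3(1-x))$. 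Without this step you have not actually connected your $\lambda^\ast$ to the function $B_1$ named in the statement; once you insert the cubic reduction and the uniqueness-of-positive-root argument, the proof is complete and coincides with the paper's.
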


\begin{proof}
Under the assumption $\alpha=1/2$ and $l=1$, the equation $-(l-\alpha)\log (1-(l-\alpha)\epsilon_0) +l\log(1-l\epsilon_0)-\alpha\log\alpha\epsilon_0  +\log\tilde{y}=0$ becomes of the form
\begin{displaymath}
-\frac{1}{2}\log\left(1-\frac{\epsilon_0}{2}\right) +\log(1-\epsilon_0)-\frac{1}{2}\log\epsilon_0  +\frac{1}{2}\log 2 +\log(e^{\lambda}-1) -\log 2=0. 
\end{displaymath}
Equivalently we have $(e^{\lambda}-1)^2 =(2-\epsilon_0)\epsilon_0/(1-\epsilon_0)^2$. Solving the equation we obtain $\epsilon_0 =1-1/\sqrt{B^2+1}$ with $B =e^{\lambda}-1$.  Going back to the rate function, we have
\begin{align*}
I_{1}^{1/2}(x)&= \sup_{\lambda>0}\left\{\lambda x -\log \frac{1-\epsilon_0/2}{1-\epsilon_0}\right\}\\
&= \sup_{\lambda>0}\left\{\lambda x -\log \frac{2-\epsilon_0}{1-\epsilon_0}\right\}+\log 2\\
&= \sup_{\lambda>0}\left\{\lambda x -\log (1+\sqrt{B^2+1})\right\}+\log 2.
\end{align*}
It is known  that $I_{1}^{1/2}(0)=0$. Moreover, for $x=1$,  we have  the following expression
\begin{align*}
&\sup_{\lambda>0}\left\{\lambda  -\log (1+\sqrt{B^2+1})\right\}\\
&\quad= \sup_{\lambda>0}\left\{\log \frac{B+1}{1+\sqrt{B^2+1}}\right\}\\
&\quad= \lim_{\lambda \rightarrow +\infty}\log \frac{B+1}{1+\sqrt{B^2+1}}=0,
\end{align*} 
which implies that $I^{1/2}_{1}(1)=\log2$. In general, for $0<x<1$, set $h(\lambda)= \lambda x - \log(1+\sqrt{B^2+1})$. Then $h'(\lambda)=x- B(B+1)/(B^2 +1 +\sqrt{B^2+1})$ and, in particular, the solution of the equation $h'(\lambda)=0$ satisfies the following identity
\begin{equation}\label{s-eq16}
 (1-x)^2B^3 +2(1-x)B^2 +(1-x)^2B-2x=0,
\end{equation}
and
\begin{align*}
\Delta &= 64 x(1-x)^3 +4(1-x)^6-36x(1-x)^5-4(1-x)^8-108x^2(1-x)^4\\
&=4(1-x)^6[1-(1-x)^2] + (1-x)^3x[64 - 36(1-x)^2 -108x(1-x)] \\
&\geq 4(1-x)^6[1-(1-x)^2] + (1-x)^3x[64-36-27]>0
\end{align*}
is the discriminant. Let $G(B)$ denote the left-hand side of \eqref{s-eq16}. By a direct calculation it follows that $G'(B)=0$ has two negative roots. This, combined with the fact that $G(0)=-2x <0$, implies that one and only one of the three roots of \eqref{s-eq16} is positive. 
Denote this root by $B_1(x)$. Then the rate function is
\begin{equation}\label{rate12}
 I_{1}^{1/2}(x)= x\log [B_1(x)+1]+\log 2 -\log \left(1+\sqrt{B^2_1(x)+1}\right).
\end{equation}
 Making a change of variable  in \eqref{s-eq16} such that $C=B+2/(3(1-x))$ we obtain the following depressed form of the equation $C^3 +pC+q =0$ where $p= 1-4/(3(1-x)^2)<0$ and $q=[16-18(1-x)^2-54 x(1-x)]/(27(1-x)^3)$. Then
 \begin{equation}\label{s-eq17}
 B_1(x)= 2\sqrt{\frac{-p}{3}}\cos\bigg(\frac{1}{3}\arccos \bigg(\frac{3q}{2p}\sqrt{\frac{-3}{p}}\bigg)\bigg) -\frac{2}{3(1-x)}
 \end{equation}
follows by a direct application of the Vi\'ete's trigonometric formula. The proof is completed by combining the rate function \eqref{rate12} with the function $B_{1}$ in \eqref{s-eq17}.
\end{proof}

To some extent Theorem \ref{teorema_prior} provides a generalization of the large deviation principle for $K_{n}$ introduced in Theorem 1.2 of Feng and Hoppe \cite{Fen(98)}, for any $\alpha\in(0,1)$ and $\theta>-\alpha$. Indeed, recall that one has the following relations between $K_{n}$ and $M_{l,n}$: $K_n=\sum_{1\leq i\leq n}M_{l,n}$ and $n=\sum_{1\leq i\leq n}lM_{l,n}$. However, so far it is not clear to us how to relate the large deviation principle for $M_{l,n}$ with  the large deviation principle for $K_n$. In this respect we retain that the results in Dinwoodie and Zabell \cite{Din(92)} may be helpful  in understanding such a relation.

%%%%%%%%%%%%%%%%
%%%%%%%%%%%%%%%%
%%%%%%%%%%%%%%%%
%%%%%%%%%%%%%%%%

\section{Conditional large deviations}\label{sec3}

For any $\alpha\in(0,1)$ and $\theta>-\alpha$, let $(X_1, \ldots,X_n)$ be an initial sample from  $\tilde{P}_{\alpha,\theta,\nu}$ and let $(X_{n+1},\ldots,X_{n+m})$ be an additional sample, for any $m\geq1$. Furthermore, let $X^{\ast}_{1},\ldots,X^{\ast}_{K_{n}}$ be the labels identifying the $K_{n}$ blocks generated by $(X_{1},\ldots,X_{n})$ with corresponding frequencies $\mathbf{N}_{n}$, and let $L_{m}^{(n)}=\sum_{1\leq i\leq m}\prod_{1\leq k\leq K_{n}}\mathbbm{1}_{\{X_{k}^{\ast}\}^{c}}(X_{n+i})$ be the number of elements in the additional sample that do not coincide with elements in the initial sample. If we denote  by $K_{m}^{(n)}$ the number of new blocks generated by these $L_{m}^{(n)}$ elements and by $X^{\ast}_{K_{n}+1},\ldots,X^{\ast}_{K_{n}+K_{m}^{(n)}}$ their labels, then
\begin{equation}\label{eq:new_freq}
S_{i}=\sum_{l=1}^{m}\mathbbm{1}_{\{X^{\ast}_{K_{n}+i}\}}(X_{n+l}),
\end{equation}
for $i=1,\ldots,K_{m}^{(n)}$, are the frequencies of the $K_{m}^{(n)}$ blocks. The frequencies of the blocks generated by the remaining $m-L_{m}^{(n)}$ elements of the additional sample are
\begin{equation}\label{eq:old_freq}
R_{i}=\sum_{l=1}^{m}\mathbbm{1}_{\{X^{\ast}_{i}\}}(X_{n+l}),
\end{equation}
for $i=1,\ldots,K_{n}$. The blocks generated by the $m-L_{m}^{(n)}$ elements of the additional sample are termed ``old"  to be distinguished from the $K_{m}^{(n)}$ new blocks generated by the $L_{m}^{(n)}$ elements of the additional sample. The random variables \eqref{eq:new_freq} and \eqref{eq:old_freq}, together with $L_{m}^{(n)}$ and $K_{m}^{(n)}$, completely describe the conditional random partition induced by $(X_{n+1},\ldots,X_{n+m})$ given $(X_{1},\ldots,X_{n})$. See Lijoi et al. \cite{Lij(08)} and Favaro et al. \cite{Fav(13)} for a comprehensive study on the conditional distributions of these random variables given the initial sample.

The random variables \eqref{eq:new_freq} and \eqref{eq:old_freq} lead to define the number $M_{l,m}^{(n)}$ of blocks with frequency $l$ in the enlarged sample $(X_{1},\ldots,X_{n+m})$. This is the number of new blocks with frequency $l$ generated by $(X_{n+1},\ldots,X_{n+m})$ plus the number of old blocks with frequency $l$ that arise by updating, via $(X_{n+1},\ldots,X_{n+m})$, the frequencies already induced by $(X_{1},\ldots,X_{n})$. Specifically, let
\begin{equation*}\label{eq:freq_new_block}
N_{l,m}^{(n)}=\sum_{i=1}^{K_{m}^{(n)}}\mathbbm{1}_{\{S_{i}=l\}}
\end{equation*}
be the number of new blocks with frequency $l$. Specifically, these new blocks are generated by the $L_{m}^{(n)}$ elements of the additional sample. Furthermore, let
\begin{equation*}\label{eq:freq_old_block}
O_{l,m}^{(n)}=\sum_{i=1}^{K_{n}}\mathbbm{1}_{\{N_{i}+R_{i}=l\}}
\end{equation*}
be the number of old blocks with frequency $l$. Specifically, these old blocks are generated by updating, via the $m-L_{m}^{(n)}$ elements of the additional sample,  the frequencies of random partition induced by the initial sample. Therefore, $M_{l,m}^{(n)}=O_{l,m}^{(n)}+N_{l,m}^{(n)}$. The conditional distribution of $M_{l,m}^{(n)}$, given the initial sample, has been recently derived and investigated in Favaro et al. \cite{Fav(13)}. Hereafter we present a large deviation principle, as $m$ tends to infinity, for $M_{l,m}^{(n)}\,|\,(K_{n},\mathbf{N}_{n})$.

The study of large deviations for $M_{l,m}^{(n)}\,|\,(K_{n},\mathbf{N}_{n})$ reduces to the study of large deviations for the conditional number of new blocks with frequency $l$, namely $N_{l,m}^{(n)}\,|\,(K_{n},\mathbf{N}_{n})$. Indeed $N_{l,m}^{(n)}\leq M_{l,m}^{(n)}\leq N_{l,m}^{(n)}+n$. Hence, by means of a direct application of Corollary B.9 in Feng \cite{Feng10}, the quantities $m^{-1}M_{l,m}^{(n)}\,|\,(K_{n},\mathbf{N}_{n})$ and $m^{-1}N_{l,m}^{(n)}\,|\,(K_{n},\mathbf{N}_{n})$ satisfy the same large deviation principle. This large deviation principle is established through the study of the moment generating function of $N_{l,m}^{(n)}\,|\,(K_{n},\mathbf{N}_{n})$. For any $\lambda>0$ and  $y=1-\text{e}^{-\lambda}$, let
\begin{align}\label{eq_genfun_posterior}
&G_{N_{l,m}^{(n)}}(y;\alpha,\theta)\\
&\notag\quad=\E_{\alpha,\theta}\left[\left(\frac{1}{1-y}\right)^{N_{l,m}^{(n)}}\,|\,K_{n}=j,\mathbf{N}_{n}=\mathbf{n}\right]\\
&\notag\quad=\sum_{i\geq0}\frac{y^{i}}{i!}\mathbb{E}_{\alpha,\theta}[(N_{l,m}^{(n)})_{(i)}\,|\,K_{n}=j,\mathbf{N}_{n}=\mathbf{n}].
\end{align}
Theorem 1 in Favaro et al. \cite{Fav(13)} provides an explicit expression for the falling factorial moment $\mathbb{E}_{\alpha,\theta}[(N_{l,m}^{(n)})_{[r]}\,|\,K_{n}=j,\mathbf{N}_{n}=\mathbf{n}]$. Hence, by exploiting the aforementioned relation between falling factorials and rising factorials, an explicit expression for $\mathbb{E}_{\alpha,\theta}[(N_{l,m}^{(n)})_{(r)}\,|\,K_{n}=j,\mathbf{N}_{n}=\mathbf{n}]$ is obtained. Specifically,
\begin{align}\label{fat_post_alpha}
&\E_{\alpha,\theta}[(N_{l,m}^{(n)})_{(r)}\,|\,K_{n}=j,\mathbf{N}_{n}=\mathbf{n}]\\
&\notag\quad=r!\sum_{i=0}^{r}{r-1\choose r-i}\frac{\left(\frac{\alpha(1-\alpha)_{(l-1)}}{l!}\right)^{i}\left(\frac{\theta}{\alpha}\right)_{(j+i)}(m)_{[il]}(\theta+i\alpha+n)_{(m-il)}}{i!(\theta+n)_{(m)}(\theta/\alpha)_{(j)}}
\end{align}
and
\begin{align}\label{fat_post_0}
&\E_{\alpha,0}[(N_{l,m}^{(n)})_{(r)}\,|\,K_{n}=j,\mathbf{N}_{n}=\mathbf{n}]\\
&\notag\quad=j(r-1)! \sum_{i=0}^{r}{r\choose i}{j+i-1\choose i-1}\frac{\left(\frac{\alpha(1-\alpha)_{(l-1)}}{l!}\right)^{i}(m)_{[il]}(i\alpha+n)_{(m-il)}}{(n)_{(m)}}
\end{align}
where the sums over $i$ is nonnull for $0\leq i\leq \min(r,\lfloor{m/l\rfloor})$. Note that $\mathbb{E}_{\alpha,\theta}[(N_{l,m}^{(n)})_{(r)}\,|\,K_{n}=j,\mathbf{N}_{n}=\mathbf{n}]=\E_{\alpha,\theta}[(N_{l,m}^{(n)})_{(r)}\,|\,K_{n}=j]$. In other terms the number $K_{n}$ of blocks in the initial sample is a sufficient statistics for $\mathbb{E}_{\alpha,\theta}[(N_{l,m}^{(n)})_{(r)}\,|\,K_{n}=j,\mathbf{N}_{n}=\mathbf{n}]$. This property of sufficiency was pointed out in Favaro et al. \cite{Fav(13)}. Along lines similar to Lemma \ref{lemma_prior}, in the next lemma we provide an explicit expression for the moment generating function $G_{N_{l,m}^{(n)}}(y;\alpha,0)$.

\begin{lem}\label{lemma_post_0}
For any $\alpha\in(0,1)$
\begin{align}\label{mom_gen_post_0}
&G_{N_{l,m}^{(n)}}(y;\alpha,0)\\
&\notag\quad =\frac{m!}{(n)_{(m)}}\sum_{i=0}^{\lfloor m/l\rfloor}\left(\frac{y}{1-y}\right)^{i}\left(\frac{\alpha(1-\alpha)_{(l-1)}}{l!}\right)^{i}\\
&\notag\quad\quad\times{j+i-1\choose i}\frac{(i\alpha+n)}{(m-il)}{n+m+i\alpha-il-1\choose m-il-1}.
\end{align}
\end{lem}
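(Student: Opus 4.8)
The plan is to mirror the proof of Lemma~\ref{lemma_prior}: start from the series expansion \eqref{eq_genfun_posterior} of the moment generating function and substitute the explicit rising factorial moments \eqref{fat_post_0}. Concretely, writing $\tilde c_{l}=\alpha(1-\alpha)_{(l-1)}/l!$ and $\tilde y=y/(1-y)$, one has
\begin{displaymath}
G_{N_{l,m}^{(n)}}(y;\alpha,0)=\sum_{r\geq0}\frac{y^{r}}{r!}\,\E_{\alpha,0}[(N_{l,m}^{(n)})_{(r)}\,|\,K_{n}=j]
=\sum_{r\geq0}\frac{y^{r}}{r!}\,j\,(r-1)!\sum_{i=0}^{r}\binom{r}{i}\binom{j+i-1}{i-1}\tilde c_{l}^{\,i}\,\frac{(m)_{[il]}(i\alpha+n)_{(m-il)}}{(n)_{(m)}},
\end{displaymath}
with the convention that the $r=0$ term is handled separately (it equals $1$, which fixes the prefactor $m!/(n)_{(m)}$ once the $i=0$ contributions are summed). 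The task is then purely combinatorial: interchange the order of summation so that $i$ runs on the outside and $r$ on the inside, and evaluate the resulting inner sum over $r$ in closed form.

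The key step is the inner summation. After swapping, the $r$-sum is $\sum_{r\geq i}\frac{y^{r}}{r}\binom{r}{i}$, i.e.\ (up to the factor $j\binom{j+i-1}{i-1}\tilde c_{l}^{\,i}(m)_{[il]}(i\alpha+n)_{(m-il)}/(n)_{(m)}$) a sum of the form $\sum_{r\geq i}\frac{1}{r}\binom{r}{i}y^{r}$. Using $\frac{1}{r}\binom{r}{i}=\frac{1}{i}\binom{r-1}{i-1}$ for $i\geq1$ and the generating identity $\sum_{r\geq i}\binom{r-1}{i-1}y^{r}=y^{i}/(1-y)^{i}$, this collapses to $\frac{1}{i}\tilde y^{\,i}$. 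Combining $\frac{j}{i}\binom{j+i-1}{i-1}=\binom{j+i-1}{i}$ and recognizing $(m)_{[il]}/(m-il)!=m!/((m-il)!\,(m-il))\cdot\binom{m-il}{?}$-type rearrangements — more precisely $(m)_{[il]}=m!/(m-il)!$, so that $\frac{(m)_{[il]}(i\alpha+n)_{(m-il)}}{(n)_{(m)}}=\frac{m!}{(n)_{(m)}}\cdot\frac{(i\alpha+n)_{(m-il)}}{(m-il)!}=\frac{m!}{(n)_{(m)}}\cdot\frac{1}{i\alpha+n}\binom{n+m+i\alpha-il-1}{m-il-1}\cdot\frac{(i\alpha+n)}{\,}$ after absorbing the missing factor $(m-il)$ in the denominator of \eqref{mom_gen_post_0} — one recovers exactly the stated expression, including the factor $(i\alpha+n)/(m-il)$ and the binomial $\binom{n+m+i\alpha-il-1}{m-il-1}$. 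The $\alpha\to 0$ sanity check and the $n\to 0$, $j\to 0$ degeneracy to \eqref{mgf_prior} (after the appropriate normalization) can be used to guard against algebra slips.

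The main obstacle is bookkeeping rather than conceptual: one must be careful about the boundary index $i=0$ (where $\binom{j+i-1}{i-1}$ must be interpreted as $0$, so that term is absent from the $r$-sum and instead only contributes through the $r=0$ term of \eqref{eq_genfun_posterior}), about the truncation $i\leq\lfloor m/l\rfloor$ coming from $(m)_{[il]}=0$ once $il>m$, and about correctly rewriting the ratio of Pochhammer symbols as a single generalized binomial coefficient $\binom{n+m+i\alpha-il-1}{m-il-1}$ — this last identity, $\frac{(a)_{(N)}}{N!}=\binom{a+N-1}{N}$, is where the non-integer parameter $i\alpha$ enters and must be tracked through a $\Gamma$-function identity rather than a finite product. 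Once the inner sum is evaluated and these three bookkeeping points are checked, the identity \eqref{mom_gen_post_0} follows by matching coefficients. No analytic input is needed; this is the conditional analogue of the ``standard combinatorial manipulations'' invoked for Lemma~\ref{lemma_prior}.
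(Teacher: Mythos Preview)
Your proposal is correct and follows exactly the route the paper indicates: the paper gives no detailed proof of this lemma, merely saying it is obtained ``along lines similar to Lemma~\ref{lemma_prior}'' by combining the rising factorial moments \eqref{fat_post_0} with the series expansion \eqref{eq_genfun_posterior} via ``standard combinatorial manipulations'', and your swap of summation order together with the identity $\frac{1}{r}\binom{r}{i}=\frac{1}{i}\binom{r-1}{i-1}$, the negative-binomial sum $\sum_{r\geq i}\binom{r-1}{i-1}y^{r}=\tilde y^{\,i}$, and the reduction $\frac{j}{i}\binom{j+i-1}{i-1}=\binom{j+i-1}{i}$ carry this out precisely. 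Your bookkeeping remarks about the $r=0$ and $i=0$ boundary terms and about rewriting $(i\alpha+n)_{(m-il)}/(m-il)!$ as $\frac{i\alpha+n}{m-il}\binom{n+m+i\alpha-il-1}{m-il-1}$ via the $\Gamma$-identity $(a)_{(N)}/N!=\binom{a+N-1}{N}$ are exactly the points that need to be checked; only the exposition around that last rewriting (the stray ``$\binom{m-il}{?}$'' and the dangling fraction) should be cleaned up in a final write-up.
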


In the next theorem we exploit the moment generating function \eqref{mom_gen_post_0} and the rising factorial moment \eqref{fat_post_alpha} in order to establish the large deviation principle for $M_{l,m}^{(n)}\,|\,(K_{n},\mathbf{N}_{n})$. Such a result provides a conditional counterpart of Theorem \ref{teorema_prior}.

\begin{thm}\label{teorema_posterior}
For any $\alpha\in(0,1)$ and $\theta>-\alpha$, as $m$ tends to infinity, $m^{-1}M_{l,m}^{(n)}\,|\,(K_{n},\mathbf{N}_{n})$ satisfies a large deviation principle with speed $m$ and rate function $I_{l}^{\alpha}(x)=\sup_{\lambda}\{\lambda x-\Lambda_{\alpha,l}(\lambda) \}$ where $\Lambda_{\alpha,l}$ is specified in \eqref{s-ldp-eq1}. In particular, for almost all $x>0$ 
\begin{displaymath}
\lim_{m\rightarrow+\infty}\frac{1}{m}\log\mathbb{P}\left[\frac{M_{l,m}^{(n)}}{m}> x\,|\,K_{n}=j,\mathbf{N}_{n}=\mathbf{n}\right]=-I^{\alpha}_{l}(x)
\end{displaymath}
where $I^{\alpha}_{l}(0)=0$ and $I^{\alpha}_{l}(x)<+\infty$ for $x\in(0,1/l]$. Moreover $I^{\alpha}_{l}(x)=+\infty$ for $x\notin[0,1/l]$.
\end{thm}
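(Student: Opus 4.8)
The plan is to mirror the three-part structure of the proof of Theorem~\ref{teorema_prior}, replacing the unconditional generating function by the conditional one and checking that the extra factors do not affect the logarithmic asymptotics. As already noted in the text, since $N_{l,m}^{(n)}\leq M_{l,m}^{(n)}\leq N_{l,m}^{(n)}+n$ with $n$ fixed, Corollary~B.9 in Feng \cite{Feng10} reduces everything to a large deviation principle for $m^{-1}N_{l,m}^{(n)}\,|\,(K_{n},\mathbf{N}_{n})$, so it suffices to compute $\lim_{m\to\infty}m^{-1}\log G_{N_{l,m}^{(n)}}(y;\alpha,\theta)$ and invoke the G\"artner--Ellis theorem exactly as before.

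First I would treat the case $\theta=0$ using the explicit expression \eqref{mom_gen_post_0}. For $\lambda\le 0$ one checks directly that $m^{-1}\log\E_{\alpha,0}[e^{\lambda N_{l,m}^{(n)}}\,|\,K_n=j,\mathbf N_n=\mathbf n]\to 0$. For $\lambda>0$, write $y=1-e^{-\lambda}$ and $\tilde y=\alpha y(1-\alpha)_{(l-1)}/((1-y)l!)$, so that the $i$-th term of \eqref{mom_gen_post_0} is
\begin{displaymath}
\frac{m!}{(n)_{(m)}}\,\tilde y^{\,i}\binom{j+i-1}{i}\frac{i\alpha+n}{m-il}\binom{n+m+i\alpha-il-1}{m-il-1}.
\end{displaymath}
The prefactor $m!/(n)_{(m)}$ contributes $0$ to $m^{-1}\log(\cdot)$ (it behaves polynomially in $m$, being $\sim \Gamma(n)m^{1-n}$), the binomial $\binom{j+i-1}{i}$ with $j$ fixed is at most polynomial in $i\le m/l$, and the factor $(i\alpha+n)/(m-il)$ is likewise sub-exponential on the relevant range; the only exponentially growing factor is $\tilde y^{\,i}\binom{n+m+i\alpha-il-1}{m-il-1}$. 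Applying Stirling's formula with error bounds, exactly as in the derivation of \eqref{eq:teo1_1}, the binomial coefficient $\binom{n+m+i\alpha-il-1}{m-il-1}$ with $i=\epsilon m$ has exponential rate $\phi(1-(l-\alpha)\epsilon)-\phi(1-l\epsilon)-\phi(\alpha\epsilon)$ in $m$, since the additive constant $n$ inside the arguments washes out in the limit. Hence $\lim_{m\to\infty}m^{-1}\log G_{N_{l,m}^{(n)}}(y;\alpha,0)=\max\{\varphi(\epsilon):0\le\epsilon\le 1/l\}=\Lambda_{\alpha,l}(\lambda)$ with the same $\varphi$ and the same $\Lambda_{\alpha,l}$ of \eqref{s-ldp-eq1}; as $\Lambda_{\alpha,l}$ is everywhere differentiable, G\"artner--Ellis yields the large deviation principle for $m^{-1}N_{l,m}^{(n)}\,|\,(K_n=j)$ with rate $I_l^\alpha$. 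The statements $I_l^\alpha(0)=0$, $I_l^\alpha(x)<+\infty$ on $(0,1/l]$ and $I_l^\alpha(x)=+\infty$ off $[0,1/l]$ then follow verbatim from the second part of the proof of Theorem~\ref{teorema_prior}, using the decomposition $\Lambda_{\alpha,l}(\lambda)=\lambda/l+\tilde\Lambda_{\alpha,l}(\lambda)$ and the bound $N_{l,m}^{(n)}\le m/l$.

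For the general case $\theta>-\alpha$ I would proceed as in the third part of the proof of Theorem~\ref{teorema_prior}: combining \eqref{eq_genfun_posterior} with \eqref{fat_post_alpha} and standard combinatorial manipulations, write $G_{N_{l,m}^{(n)}}(y;\alpha,\theta)=\sum_{i=0}^{\lfloor m/l\rfloor}\tilde D(\alpha,\theta,n,j,i)\,\tilde y^{\,i}\frac{i\alpha+n}{m-il}\binom{n+m+i\alpha-il-1}{m-il-1}$ for a correction factor $\tilde D$ with $\tilde D(\alpha,\theta,n,j,0)$ a constant, and show by the same $\theta/\alpha>-1$ estimates as in \eqref{s-may27-eq6} that $\tilde D$ is sandwiched between $d_3 m^{-c}$ and $d_4 m^{c}$ uniformly in $i$ for suitable constants depending only on $\alpha,\theta,n,j$. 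Since these bounds are polynomial in $m$, they do not affect $m^{-1}\log(\cdot)$, so $\lim_{m\to\infty}m^{-1}\log G_{N_{l,m}^{(n)}}(y;\alpha,\theta)=\Lambda_{\alpha,l}(\lambda)$, and the large deviation principle with rate $I_l^\alpha$ follows, then transfers to $m^{-1}M_{l,m}^{(n)}\,|\,(K_n,\mathbf N_n)$ via the exponential-equivalence argument above. The main obstacle I anticipate is not conceptual but bookkeeping: one must verify carefully that all the extra conditional factors --- $m!/(n)_{(m)}$, $\binom{j+i-1}{i}$, and the ratio $(\theta+i\alpha+n)_{(m-il)}/(i\alpha+n)_{(m-il)}$ appearing in $\tilde D$ --- are genuinely sub-exponential \emph{uniformly} over $0\le i\le m/l$, including the delicate endpoint regimes $i$ near $m/l$ (where $m-il$ is bounded) and the boundary term when $m/l$ is an integer; these are exactly the places where the argument of Theorem~\ref{teorema_prior} needed separate treatment, and the same care is required here.
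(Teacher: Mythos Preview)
Your proposal is correct and follows the same overall architecture as the paper (reduce to $N_{l,m}^{(n)}\,|\,K_n$ via exponential equivalence, treat $\theta=0$ first, then extend by a termwise polynomial comparison). The one genuine difference is in how you handle the $\theta=0$ case. You propose to redo the Stirling analysis of Theorem~\ref{teorema_prior} directly on the conditional binomial $\binom{n+m+i\alpha-il-1}{m-il-1}$, arguing that the additive $n$ washes out; this is valid, but it obliges you to redo the delicate endpoint bookkeeping (the regimes $\alpha i<1$, $n+m-il<1$, etc.) that you yourself flag as the main obstacle. The paper instead sidesteps this entirely: it rewrites the $i$-th term as $\binom{n+m+i\alpha-il-1}{n+m-il-1}$ times a factor bounded between $((n-1)!/(m+n)^{n-1})^2$ and $(m+n)^{n+j-1}$ uniformly in $i$, which sandwiches $G_{N_{l,m}^{(n)}}(y;\alpha,0)$ between polynomial multiples of the \emph{unconditional} generating function $G_{M_{l,n+m}}(y;\alpha,0)$ at sample size $n+m$ (up to a harmless tail sum over $\lfloor m/l\rfloor<i\le\lfloor(n+m)/l\rfloor$). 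Theorem~\ref{teorema_prior} then applies as a black box, and all the endpoint cases come for free. Your route works but duplicates effort; the paper's comparison-to-$G_{M_{l,n+m}}$ trick is the cleaner way to close exactly the gap you anticipated.
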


\begin{proof}
As we anticipated, in order to prove the theorem, it is sufficient to prove the large deviation principle for $N_{l,m}^{(n)}\,|\,K_{n}$, for any $\alpha\in(0,1)$ and $\theta>-\alpha$. We start with the assumption $\alpha\in(0,1)$ and $\theta=0$ and then we consider the general case. From the moment generating function \eqref{mom_gen_post_0} we can write
\begin{displaymath}
G_{N_{l,m}^{(n)}} (y;\alpha,0)=\sum_{i=0}^{\lfloor m/l\rfloor}\tilde{y}^i C(i,m; n,j,\alpha,l)
\end{displaymath}
where
\begin{align*}
&C(i,m;n,j,\alpha,l)\\
&\quad=\frac{m!}{(n)_{(m)}}{j+i-1\choose i}\frac{i\alpha+n}{m-il}{n+m+i\alpha-il-1\choose m-il-1}\\
&\quad={n+m +i\alpha -il-1\choose n+m-il-1}\frac{m!}{(n)_{(m)}}{j+i-1\choose i} \frac{(m-il+1)_{(n-2)}}{(i\alpha+1)_{(n-2)}}\\
&\quad={n+m +i\alpha -il-1\choose n+m-il-1}\\
&\quad\quad\times\frac{(n-1)!}{(m+1)\cdots(m+n-1)}\frac{(m-il+1)_{(n-2)}}{(i\alpha+1)_{(n-2)}} {j+i-1\choose i}  
\end{align*}
which is bounded below by $((n-1)!/(m+n)^{n-1})^2$, and from above by $(m+n)^{n+j-1}$.  Hence,
\begin{equation}\label{s-may27-eq1}
G_{N_{l,m}^{(n)}}(y;\alpha,0)\leq (m+n)^{n+j-1}G_{M_{l,n+m}}(y;\alpha,0)
\end{equation}
and
\begin{equation}\label{s-may27-eq2}
G_{N_{l,m}^{(n)}}(y;\alpha,0)\geq\frac{\left(G_{M_{l,n+m}}(y;\alpha,0)-\sum_{i=\lfloor m/l\rfloor+1}^{\lfloor (n+m)/l \rfloor}\tilde{y}^i {n+m +i\alpha -il-1\choose n+m-il-1}\right)}{\left(\frac{(n-1)!}{(m+n)^{n-1}}\right)^{-2}}.
\end{equation}
Note that, for any index $i$ such that $\lfloor m/l\rfloor+1\leq i \leq \lfloor(m+n)/l\rfloor$, we can write the following inequalities $1\leq{n+m +i\alpha -il-1\choose n+m-il-1}= (n+m-il)\cdots (n+m-il-1 +i\alpha)/(i\alpha)!\leq (n+1)\cdots (n +i\alpha)/(i\alpha)!\leq (2n+m)^n$ and, in particular one has
\begin{equation}\label{s-may27-eq3}
\lim_{m\rightarrow +\infty}\frac{1}{m}\log \sum_{i=\lfloor m/l\rfloor+1}^{\lfloor (n+m)/l \rfloor}\tilde{y}^i {n+m +i\alpha -il-1\choose n+m-il-1}=0. 
\end{equation}
Accordingly, putting together \eqref{s-may27-eq1}, \eqref{s-may27-eq2} and \eqref{s-may27-eq3}, we obtain the following identity
\begin{align*}
&\lim_{m\rightarrow+ \infty}\frac{1}{m}\log G_{N_{l,m}^{(n)}}(y;\alpha,0)= \lim_{m\rightarrow+\infty}\frac{1}{n+m}\log G_{M_{l,n+m}}(y;\alpha,0)  
\end{align*}
which, once combined with Theorem \ref{teorema_prior}, implies that $m^{-1}N_{l,m}^{(n)}\,|\,K_{n}$ satisfies a large deviation principle with speed $m$ and rate function $I_{l}^{\alpha}$. In order to deal with the general case $\alpha\in(0,1)$ and $\theta>-\alpha$, one needs a term wise comparison between \eqref{fat_post_alpha} and \eqref{fat_post_0}. In particular, for any $i\leq m/l$ let us define
\begin{equation*}\label{s-may27-eq7}
D(m,i;\alpha,\theta,n,j)=\frac{(n)_{(m)}}{(\theta+n)_{(m)}} \frac{(j-1)!(\frac{\theta}{\alpha})_{(j+i)}}{(j+i-1)!(\frac{\theta}{\alpha})_{(j)}}\frac{(\theta +n+i\alpha)_{(m-il)}}{(n+i\alpha)_{(m-il) 1}}.
\end{equation*}
Then, one has
\begin{align*}
&\E_{\alpha,\theta}[(N_{l,m}^{(n)})_{(r)}\,|\,K_{n}=j]\\
&\notag\quad=\frac{j}{(n)_{(m)}}(r-1)! \sum_{i=0}^{r}D(m,i;\alpha,\theta,n,j){r\choose i}{j+i-1\choose i-1}(m)_{[il]}\\
&\notag\quad\quad\times\left(\frac{\alpha(1-\alpha)_{(l-1)}}{l!}\right)^{i}(i\alpha+n)_{(m-il)}.
\end{align*}
By an argument similar to those used in deriving \eqref{s-may27-eq6} it follows that one can find constants $d_5>0$ and $d_6>0$ and positive integers $k_1$ and $k_2$ independent of $m$ and $i$ such that $d_5 (n+m)^{-k_1}\leq D(m,i;\alpha,\theta,n,j)\leq d_6 (n+m)^{k_2}$ which leads to
\begin{align*}
&d_5 \left(\frac{1}{n+m}\right)^{k_1}G_{N_{l,m}^{(n)}}(y;\alpha,0)\\
&\quad\leq G_{N_{l,m}^{(n)}}(y;\alpha,\theta)\\
&\quad\leq  G_{N_{l,m}^{(n)}}(y;\alpha,0) d_6 (n+m)^{k_2}.
\end{align*}
Such a result, combined with Theorem \ref{teorema_prior}, implies that $m^{-1}N_{l,m}^{(n)}\,|\,K_{n}$ satisfies a large deviation principle with speed $m$ and rate function $I_{l}^{\alpha}$. Hence, by a direct application of Corollary B.9 in Feng \cite{Feng10}, $m^{-1}M_{l,m}^{(n)}\,|\,(K_{n},\mathbf{N}_{n})$ satisfies a large deviation principle with speed $m$ and rate function $I_{l}^{\alpha}$, and the proof is completed.
\end{proof}

In contrast with the fluctuations \eqref{eq:asim_prior_2pd_freq} and \eqref{eq:fluct_post_freq}, Theorem \ref{teorema_prior} and Theorem \ref{teorema_posterior} show that in terms of large deviations the given initial sample $(X_{1},\ldots,X_{n})$ have no long lasting impact. Specifically the large deviation principles for $M_{l,n}$ and $M_{l,m}^{(n)}\,|\,(K_{n},\mathbf{N}_{n})$ are equivalent when $n$ and $m$ tend to infinity, respectively. This is caused by the two different scalings involved, namely $m^{-1}$ for large deviations and $m^{-\alpha}$ for the fluctuations. According to Corollary 20 in Pitman \cite{Pit(96a)},  the initial sample $(X_{1},\ldots,X_{n})$ leads to modify the parameter $\theta$ in the conditional distribution of $\tilde{P}_{\alpha,\theta,\nu}$ given $(X_{1},\ldots,X_{n})$. Hence we conjecture that the conditional and the unconditional large deviation results will be different if $n$ is allowed to grow and leads to large parameter $\theta$. In the unconditional setting this kind of asymptotic behaviour is discussed in Feng \cite{Feng(07)}, where the parameter $\theta$ and the sample size $n$ grow together and the large deviation result will depend on the relative growth rate between $n$ and $\theta$.

If $m$ depends on $n$ and both approach infinity then one can expect very different behaviours in terms of law of large numbers and fluctuations. The large deviation principle for $M_{l,m}^{(n)}\,|\,(K_{n},\mathbf{N}_{n})$ may not be easily derived, by means of a direct comparison argument, from the large deviation principle of $N_{l,m}^{(n)}\,|\,K_{n}$.  In this respect, it is helpful to study the moment generating function
\begin{align}\label{eq:mom_gen_totale}
&G_{M_{l,m}^{(n)}}(y;\alpha,\theta)\\
&\notag\quad=\E_{\alpha,\theta}\left[\left(\frac{1}{1-y}\right)^{M_{l,m}^{(n)}}\,|\,K_{n}=j,\mathbf{N}_{n}=\mathbf{n}\right]\\
&\notag\quad=\sum_{i\geq0}\frac{y^{i}}{i!}\mathbb{E}_{\alpha,\theta}[(M_{l,m}^{(n)})_{(i)}\,|\,K_{n}=j,\mathbf{N}_{n}=\mathbf{n}].
\end{align}
We intend to pursue this study further in a subsequent project. As in Lemma \eqref{lemma_post_0}, an explicit expression for \eqref{eq:mom_gen_totale} follows by combining the rising factorial moments of $M_{l,m}^{(n)}\,|\,(K_{n},\mathbf{N}_{n})$ with the series expansion on the right-hand side of \eqref{eq_genfun_posterior}, and by means of standard combinatorial manipulations. The rising factorial moments of $M_{l,m}^{(n)}\,|\,(K_{n},\mathbf{N}_{n})$ are obtained from Theorem 3 in Favaro et al. \cite{Fav(13)}.

%%%%%%%%%%%%%%%%
%%%%%%%%%%%%%%%%
%%%%%%%%%%%%%%%%
%%%%%%%%%%%%%%%%

\section{Discussion}
Our large deviation results contribute to the study of conditional and unconditional properties of the Ewens-Pitman sampling model. Theorem \ref{teorema_posterior} has potential applications in the context of Bayesian nonparametric inference for species sampling problems. Indeed, as we pointed out in the Introduction, in such a context $\P[M_{l,m}^{(n)}\in \cdot\,|\,K_{n}=j,\mathbf{N}_{n}=\mathbf{n}]$ takes on the interpretation of the posterior distribution of the number of species with frequency $l$ in a sample $(X_{1},\ldots,X_{n+m})$ from $\tilde{P}_{\alpha,\theta,\nu}$, given the initial observed sample $(X_{1},\ldots,X_{n})$ featuring $K_{n}=j$ species with corresponding frequencies $\mathbf{N}_{n}=\mathbf{n}$. The reader is referred to Favaro et al. \cite{Fav(13)} for a comprehensive account on this posterior distribution with applications to Bayesian nonparametric inference for the so-called rare, or local, species variety.

For large $m$, $m^{-1}M_{l,m}^{(n)}$ is the random proportion of species with frequency $l$ in $(X_{1},\ldots,X_{n+m})$. In  Theorem \ref{teorema_posterior} we characterized the rate function $I^{\alpha}_{l}$ of a conditional large deviation principle associated to such a random proportion. The rate function $I_{l}^{\alpha}$ is nondecreasing over the set $[0,1/l]$. Then the number of discontinuous points of $I_{l}^{\alpha}$ is at most countable and therefore $\inf_{z\geq x}I_{l}^{\alpha}(z)=\inf_{z>x} I_{l}^{\alpha}(z)$ for almost all  $x \in [0,1/l]$. Hence, for almost all $x>0$,
\begin{align}\label{eq1_discuss}
&\lim_{m\rightarrow+\infty}\frac{1}{m}\log\P\left[\frac{M_{l,m}^{(n)}}{m}\geq x\,|\,K_{n}=j,\mathbf{N}_{n}=\mathbf{n}\right]\\
&\notag\quad=\lim_{m\rightarrow+\infty}\frac{1}{m}\log\P\left[\frac{M_{l,m}^{(n)}}{m}> x\,|\,K_{n}=j,\mathbf{N}_{n}=\mathbf{n}\right]=-I_{l}^{\alpha}(x).
\end{align}
Therefore identity \eqref{eq1_discuss} provides a large $m$ approximation of the Bayesian nonparametric estimator $\P[m^{-1}M_{l,m}^{(n)}\geq x\,|\,K_{n}=j,\mathbf{N}_{n}=\mathbf{n}]$, for any $x\geq0$, namely
\begin{equation}\label{eq:tail_est}
\mathcal{T}_{l,m}^{(n)}(x)=\P\left[\frac{M_{l,m}^{(n)}}{m}\geq x \,|\,K_{n}=j,\mathbf{N}_{n}=\mathbf{n}\right]\approx\exp\{-mI_{l}^{\alpha}(x)\}.
\end{equation}
Hereafter we thoroughly discuss $\mathcal{T}_{1,m}^{(n)}$ in Bayesian nonparametric inference for discovery probabilities. In particular we introduce a novel approximation, for large $m$, of the posterior distribution of the probability of discovering a new species at the $(n+m+1)$-th draw. Such an approximation, then, induces a natural interpretation of $\mathcal{T}_{1,m}^{(n)}$ in the context of Bayesian nonparametric inference for the probability of discovering a new species at the $(n+m+1)$-th draw.

\subsection{Discovery probabilities and large deviations}
Let $D_{m}^{(n)}$ be the probability of discovering a new species at the $(n+m+1)$-th draw. Since the additional sample $(X_{n+1}\ldots,X_{n+m})$ is not observed, $D_{m}^{(n)}\,|\,(K_{n},\mathbf{N}_{n})$ is a random probability. The randomness being determined by $(X_{n+1}\ldots,X_{n+m})$. In particular, by means of the predictive distribution  \eqref{predict}, we observe that $\P[D_{m}^{(n)}\in\cdot\,|\,K_{n}=j,\mathbf{N}_{n}=\mathbf{n}]$ is related to $\P[K_{m}^{(n)}\in\cdot\,|\,K_{n}=j,\mathbf{N}_{n}=\mathbf{n}]$ as follows
\begin{align}\label{rand_disc}
&\P[D_{m}^{(n)}\in\cdot\,|\,K_{n}=j,\mathbf{N}_{n}=\mathbf{n}]\\
&\notag\quad=\P[D_{m}^{(n)}\in\cdot\,|\,K_{n}=j]\\
&\notag\quad=\P\left[\frac{\theta+j\alpha+K_{m}^{(n)}\alpha}{\theta+n+m}\in\cdot\,|\,K_{n}=j\right]\\
&\notag\quad=\P\left[\frac{\theta+j\alpha+K_{m}^{(n)}\alpha}{\theta+n+m}\in\cdot\,|\,K_{n}=j,\mathbf{N}_{n}=\mathbf{n}\right],
\end{align}
where the conditional, or posterior, distribution $\P[K_{m}^{(n)}\in \cdot\,|\,K_{n}=j]$ was obtained in Lijoi et al. \cite{Lij(07)} and then investigated in Favaro et al. \cite{Fav(09)}. Specifically, let $\mathscr{C}(n,x,a,b)=(x!)^{-1}\sum_{0\leq i\leq x}(-1)^{i}{x\choose i}(-ia-b)_{(n)}$ be the noncentral generalized factorial coefficient. See Charalambides \cite{Cha(05)} for details. Then, for any $k=0,1,\ldots,m$, 
\begin{equation}\label{post_dist_k}
\P[K_{m}^{(n)}=k\,|\,K_{n}=j]=\frac{(\theta/\alpha+j)_{(k)}}{(\theta+n)_{(m)}}\mathscr{C}(m,k;\alpha,-n+\alpha j),
\end{equation}
and
\begin{equation}\label{estim_dist}
\E_{\alpha,\theta}[K_{m}^{(n)}\,|\,K_{n}=j]=\left(\frac{\theta}{\alpha}+j\right)\left(\frac{(\theta+n+\alpha)_{(m)}}{(\theta+n)_{(m)}}-1\right).
\end{equation}
The distribution \eqref{rand_disc} is the posterior distribution of the probability of discovering a new species at the $(n+m+1)$-th draw. An explicit  expression for this distribution is obtained by means of \eqref{post_dist_k}. Also, $\mathcal{D}_{m}^{(n)}=\E_{\alpha,\theta}[D_{m}^{(n)}\,|\,K_{n}=j]$ is the Bayesian nonparametric estimator, with respect to a squared loss function, of the probability of discovering a new species at the $(n+m+1)$-th draw. An explicit expression of this estimator is obtained by combining \eqref{rand_disc} with \eqref{estim_dist}.

We introduce a large $m$ approximation of $\P[D_{m}^{(n)}\in\cdot\,|\,K_{n}=j]$ and a corresponding large $m$ approximation of the Bayesian nonparametric estimator $\mathcal{D}_{m}^{(n)}$. This approximation sets a novel connection between the posterior distribution of the proportion of species with frequency $1$ in the enlarged sample and the posterior distribution $\P[D_{m}^{(n)}\in\cdot\,|\,K_{n}=j]$. Specifically, by simply combining the fluctuation limit \eqref{eq:fluct_post} with \eqref{rand_disc}, one has the following fluctuation
\begin{equation}\label{eq:fluct_post_discov}
\lim_{m\rightarrow+\infty}\frac{D_{m}^{(n)}}{m^{\alpha-1}}\,|\,(K_{n}=j)=\alpha S_{\alpha,\theta}^{(n,j)}\qquad\text{a.s.}
\end{equation}
where $S_{\alpha,\theta}^{(n,j)}$ has been defined in \eqref{eq:fluct_post} and \eqref{eq:fluct_post_freq}. In particular $\E[S_{\alpha,\theta}^{(n,j)}]=(j+\theta/\alpha)\Gamma(\theta+n)/\Gamma(\theta+n+\alpha)$. Then, for large $m$, the fluctuations \eqref{eq:fluct_post_discov} and \eqref{eq:fluct_post_freq} lead to
\begin{align}\label{eq:approx}
&\P[D_{m}^{(n)}\in\cdot\,|\,K_{n}=j]\\
&\notag\quad\approx\P[m^{\alpha-1}\alpha S_{\alpha,\theta}^{(n,j)}\in\cdot\,|\,K_{n}=j]\\
&\notag\quad\approx\P\left[\frac{M_{1,m}^{(n)}}{m}\in\cdot\,|\,K_{n}=j,\mathbf{N}_{n}=\mathbf{n}\right] 
\end{align}
and
\begin{align}\label{eq4_discuss}
\mathcal{D}_{m}^{(n)}&=\frac{\theta+j\alpha}{\theta+n}\frac{(\theta+n+\alpha)_{m}}{(\theta+n+1)_{m}}\\
&\notag\approx m^{\alpha-1}(j\alpha+\theta)\frac{\Gamma(\theta+n)}{\Gamma(\theta+n+\alpha)}\\
&\notag\approx\E_{\alpha,\theta}\left[\frac{M_{1,m}^{(n)}}{m}\,|\,K_{n}=j,\mathbf{N}_{n}=\mathbf{n}\right]\\
&\notag=\frac{m_{1}}{m}\frac{(\theta+n-1+\alpha)_{m}}{(\theta+n)_{m}}+(\theta+j\alpha)\frac{(\theta+n+\alpha)_{m-1}}{(\theta+n)_{m}}
\end{align}
where the last identity of \eqref{eq4_discuss} is obtained by means of Theorem 3 in Favaro et al. \cite{Fav(13)}. Interestingly, the second approximation of \eqref{eq4_discuss} is somehow reminiscent of the celebrated Good-Turing estimators introduced in Good \cite{Goo(53)} and Good and Toulmin \cite{Goo(56)}. Indeed, it shows that the estimator of the probability of discovering a new species at the $(n+m+1)$-th draw is related to the estimator of the number of species with frequency $1$ in the enlarged sample.

Intuitively, when $\theta$ and $n$ are moderately large and not overwhelmingly smaller than $m$, the exact value of $\mathcal{D}_{m}^{(n)}$ given in \eqref{eq4_discuss} is much smaller than its asymptotic approximation, which is much smaller than the exact value of $m^{-1}\mathcal{M}_{1,m}^{(n)}=\E_{\alpha,\theta}[m^{-1}M_{1,m}^{(n)}\,|\,K_{n}=j,\mathbf{N}_{n}=\mathbf{n}]$. This suggests that a finer normalization constant than $m^{\alpha}$ is to be used in the fluctuations \eqref{eq:fluct_post_freq} and \eqref{eq:fluct_post_discov}, respectively. Equivalent, though less rough, normalization rates for  \eqref{eq:fluct_post_freq} and \eqref{eq:fluct_post_discov} are
\begin{equation}\label{rate_corr1}
r_{M}(m;\alpha,\theta,n,j,m_{1})=\frac{\Gamma(\theta+\alpha+n+m-1)}{\Gamma(\theta+n+m)}\left(m_{1}\frac{\theta+\alpha+n-1}{\theta+j\alpha}+m\right),
\end{equation}
and
\begin{equation}\label{rate_corr2}
r_{D}(m;\alpha,\theta,n,j)=\frac{\Gamma(\theta+\alpha+n+m)}{\Gamma(\theta+n+m+1)}
\end{equation}
respectively. Obviously, in terms of asymptotic $r_{M}(m;\alpha,\theta,n,j,m_{1})/m^{\alpha}\rightarrow1$ and $r_{D}(m;\alpha,\theta,n,j)/m^{\alpha-1}\rightarrow1$ as $m$ tends to infinity. These corrected normalization rates are determined in such a way that $\mathcal{D}_{m}^{(n)}$ and $m^{-1}\mathcal{M}_{1,m}^{(n)}$ coincide with the corresponding asymptotic moments. Of course different procedures may be considered. Note that the number $j$ of species and the number $m_{1}$ of species with frequency $1$ affect the corrected normalization rate \eqref{rate_corr1}. 

Besides being an interesting large $m$ approximation of $\P[D_{m}^{(n)}\in\cdot\,|\,K_{n}=j]$, the result displayed in \eqref{eq:approx} induces a natural interpretation of the conditional large deviation principle of Theorem \ref{teorema_posterior}, with $l=1$, in the context of Bayesian nonparametric inference for discovery probabilities. Indeed by combining the approximations in \eqref{eq:tail_est} and \eqref{eq:approx} we can write the large $m$ approximation
\begin{align}\label{eq5_discuss11}
\mathcal{D}_{m}^{(n)}(x)&=\P[D_{m}^{(n)}\geq x\,|\,K_{n}=j]\\
&\notag\approx\mathcal{T}_{1,m}^{(n)}(x) \\
&\notag\approx\exp\{-mI_{1}^{\alpha}(x)\}.
\end{align}
By exploiting the corrected normalization rates \eqref{rate_corr1} and \eqref{rate_corr1}, a corrected version of \eqref{eq5_discuss11} is
\begin{align}\label{eq5_discuss12}
\mathcal{D}_{m}^{(n)}(x)&=\P[D_{m}^{(n)}\geq x\,|\,K_{n}=j]\\
&\notag\approx\mathcal{T}_{1,m}^{(n)}\left(x\frac{r_{M}(m;\alpha,\theta,n,j,m_{1})}{mr_{D}(m;\alpha,\theta,n,j)}\right)\\
&\notag\approx\exp\left\{-mI_{1}^{\alpha}\left(x\frac{r_{M}(m;\alpha,\theta,n,j,m_{1})}{mr_{D}(m;\alpha,\theta,n,j)}\right)\right\}.
\end{align}
In other terms Theorem \ref{teorema_posterior} with $l=1$ provides a large $m$ approximation of the  Bayesian nonparametric estimator of the right tail of the probability of discovering a new species at the $(n+m+1)$-th draw, without observing $(X_{n+1},\ldots,X_{n+m})$. We point out that If $\alpha=1/2$ then the rate function in the approximations \eqref{eq5_discuss11} and \eqref{eq5_discuss12} can be exactly computed by means of Proposition \ref{proposition_prior}.

%As we pointed out in the Introduction, the interest in $\mathcal{D}_{m}^{(n)}$ and $\P[D_{m}^{(n)}\geq x\,|\,K_{n}=j]$ is intimately related to the problem of determining an optimal sample size $m$ in the context of species sampling problems. This problem consists in determining the value of $m$, or an approximation of $m$, such that $\mathcal{D}_{m}^{(n)}$ and $\P[D_{m}^{(n)}\geq x\,|\,K_{n}=j]$, respectively, fall below, or above, a given threshold $\tau$. Such a  problem is typically motivated by expensive sampling procedures in which further draws can be only motivated by the possibility of recording a new unobserved species with a suitable high probability.  The reader is referred to Lijoi et al. \cite{Lij(07)} and Favaro et al. \cite{Fav(09)} for a comprehensive account.  %Specifically, the idea is that we specify a threshold $\tau$ for $\mathcal{D}_{m}^{(n)}$ or, alternatively, for $\P[D_{m}^{(n)}\geq x\,|\,K_{n}=j]$ that $D_{m}^{(n)}$. Then, according to such a threshold $\tau$, we determine the corresponding value of $m$ by exploiting \eqref{eq4_discuss} and \eqref{eq5_discuss}, respectively. %Note that approximations in \eqref{eq4_discuss} and \eqref{eq5_discuss} may be also combined by setting, for instance, $x=\mathcal{D}_{m}^{(n)}$ and then specifying a threshold $\tau$ for the corresponding  tail probability $\mathcal{T}_{1,m}^{(n)}(\mathcal{D}_{m}^{(n)})$.

\subsection{Illustration}

We present an illustration of our results dealing with a well-known benchmark Expressed Sequence Tag (EST) dataset. This dataset is obtained by sequencing two cDNA libraries of the amitochondriate protist Mastigamoeba balamuthi: the first library is non-normalized, whereas the second library is normalized, namely it undergoes a normalization protocol which aims at making the frequencies of genes in the library more uniform so to increase the discovery rate. See Susko and Roger \cite{Sus(04)} for comprehensive account on the Mastigamoeba cDNA library. For the Mastigamoeba non-normalized the observed sample consists of $n=715$ ESTs with $j=460$ distinct genes whose frequencies are $m_{i,715}=378, 33, 21, 9, 6, 1, 3, 1, 1, 1, 1, 5$ with $i\in\{1,2,\ldots,10\}\cup\{13,15\}$. For the the Mastigamoeba normalized the observed sample consists of $n=363$ with $j=248$ distinct genes whose frequencies are $m_{i,363}=200, 21, 14, 4, 3, 3, 1, 0, 1, 1$ with $i\in\{1,2,\ldots,9\}\cup\{14\}$. This means that we are observing $m_{1,n}$ genes which appear once, $m_{2,n}$ genes which appear twice, etc.

Under the Bayesian nonparametric model \eqref{eq:bnpmodel}, the first issue to face is represented by the specification of the parameter $(\alpha,\theta)$ characterizing the prior $\Pi$. This is typically achieved by adopting an empirical Bayes procedure in order to obtain an estimate $(\hat\alpha,\hat\theta)$ of $(\alpha,\theta)$. Specifically we fix $(\alpha,\theta)$ so to maximize the likelihood function of the model \eqref{eq:bnpmodel} under the observed sample, namely
\begin{displaymath}
(\hat\alpha,\hat\theta)=\operatorname*{arg\,max}_{(\alpha,\theta)}\left\{\frac{\prod_{i=0}^{j-1}(\theta+i\alpha)}{(\theta)_{n}}\prod_{i=1}^j(1-\alpha)_{(n_{i}-1)}\right\}.
\end{displaymath}
Alternatively, one could specify a prior distribution for $(\alpha,\theta)$. Here we adopt a less elaborate specification of the parameter $(\alpha,\theta)$. We choice $\alpha=1/2$ and then we set $\theta$ such that $\E_{1/2,\theta}[K_{n}]=(2\theta)(((\theta+2^{-1})_{n}/(\theta)_{n})-1)=j$. Empirical investigations with simulated data suggests that $\alpha=1/2$ is always a good choice when no precise prior information is available. See Lijoi et al. \cite{Lij(07)} for details. This approach gives $(\alpha,\theta)=(1/2,206.75)$ for the Mastigamoeba non-normalized and  $(\alpha,\theta)=(1/2,132.92)$ for the Mastigamoeba normalized.

For the Mastigamoeba non-normalized and normalized cDNA libraries, Table 1 reports the exact estimate $\mathcal{D}_{m}^{(n)}$ and the corresponding large $m$ approximate estimates under the uncorrected normalization rate $m^{\alpha-1}$ and the corrected normalization rate \eqref{rate_corr1}. These are denoted by $\bar{\mathcal{D}}_{m}^{(n)}$ and $\tilde{\mathcal{D}}_{m}^{(n)}$, respectively. In a similar fashion, Table 2 reports the exact estimate $m^{-1}\mathcal{M}_{1,m}^{(n)}$ and the corresponding large $m$ approximate estimates under the uncorrected normalization rate $m^{\alpha}$ and the corrected normalization rate \eqref{rate_corr2}, respectively. These are denote by $m^{-1}\bar{\mathcal{M}}_{1,m}^{(n)}$ and $m^{-1}\tilde{\mathcal{M}}_{1,m}^{(n)}$, respectively. See \eqref{eq4_discuss} for details.

\medskip
\begin{center}
(Table 1 and Table 2 about here)
\end{center}
\medskip

Table 1 and Table 2 clearly show that the corrected normalization rates \eqref{rate_corr1} and \eqref{rate_corr2} are of fundamental importance when the additional sample size $m$ is not much larger than the sample size $n$ and the parameter $\theta$. Figure 1 and Figure 2 show the large deviation approximations  \eqref{eq5_discuss11} and \eqref{eq5_discuss12} of the estimate $\mathcal{D}_{m}^{(n)}(x)$.

\medskip
\begin{center}
(Figure 1 and Figure 2 about here)
\end{center}
\medskip

%%%%%%%%%%%%%%%%
%%%%%%%%%%%%%%%%
%%%%%%%%%%%%%%%%
%%%%%%%%%%%%%%%%

\clearpage

\begin{small}
\noindent Table 2. \textit{Exact estimate and corresponding asymptotic estimates under the uncorrected and corrected normalization rate}.\\[-0.2cm]
\begin{center}
\fbox{%
\tabcolsep=0.2cm
\footnotesize{
\begin{tabular}{*{5}{c}}
cDNA Library & $m$ & $\mathcal{D}_{m}^{(n)}$  &  $\bar{\mathcal{D}}_{m}^{(n)}$ & $\tilde{\mathcal{D}}_{m}^{(n)}$\\[0.1cm]\hline\\[-0.4cm]
Mastigamoeba non-normalized
&$\lfloor 100^{-1}n\rfloor$&0.472&5.438&0.472\\
&$\lfloor 10^{-1}n\rfloor$&0.456&1.696&0.456\\
&$n$&0.357&0.538&0.357\\
 $(n=715)$&10$n$&0.160&0.314&0.160\\
 &100$n$&0.054&0.054&0.054\\[0.4cm]
Mastigamoeba normalized
&$\lfloor 100^{-1}n\rfloor$&0.516&5.770&0.516\\
&$\lfloor 10^{-1}n\rfloor$&0.500&1.923&0.500\\
&$n$&0.397&0.606&0.397\\
$(n=363)$&10$n$&0.180&0.288&0.180\\
&100$n$&0.060&0.061&0.060\\[0.1cm]
\end{tabular}}}
\end{center}
\end{small}

\bigskip

\begin{small}
\noindent Table 2. \textit{Exact estimate and corresponding asymptotic estimates under the uncorrected and corrected normalization rate}.\\[-0.4cm]
\begin{center}
\fbox{%
\tabcolsep=0.2cm
\footnotesize{
\begin{tabular}{*{5}{c}}
cDNA Library & $m$ & $m^{-1}\mathcal{M}_{1,m}^{(n)}$   & $m^{-1}\bar{\mathcal{M}}_{1,m}^{(n)}$ & $m^{-1}\tilde{\mathcal{M}}_{1,m}^{(n)}$\\[0.1cm]\hline\\[-0.2cm]
Mastigamoeba non-normalized&$\lfloor 100^{-1}n\rfloor$&54.268&5.438&54.268\\
&$\lfloor 10^{-1}n\rfloor$&5.213&1.696&5.213\\
&$n$&0.752&0.538&0.752\\
 $(n=715)$&10$n$&0.178&0.314&0.178\\
 &100$n$&0.054&0.054&0.054\\[0.4cm]
Mastigamoeba normalized&$\lfloor 100^{-1}n\rfloor$&50.316&5.770&50.316\\
&$\lfloor 10^{-1}n\rfloor$&5.865&1.923&5.865\\
&$n$&0.812&0.606&0.812\\
$(n=363)$&10$n$&0.199&0.288&0.199\\
&100$n$&0.061&0.061&0.061\\[0.1cm]
\end{tabular}}}
\end{center}
\end{small}

\clearpage

\begin{small}
\noindent Figure 1. \textit{Mastigamoeba non-normalized. Large deviation approximations of the estimate $\mathcal{D}_{m}^{(715)}(x)$ under the uncorrected (blue line) and corrected (red line) normalization rate.}
\begin{figure}[ht!]
\centering
\includegraphics[width=1\linewidth]{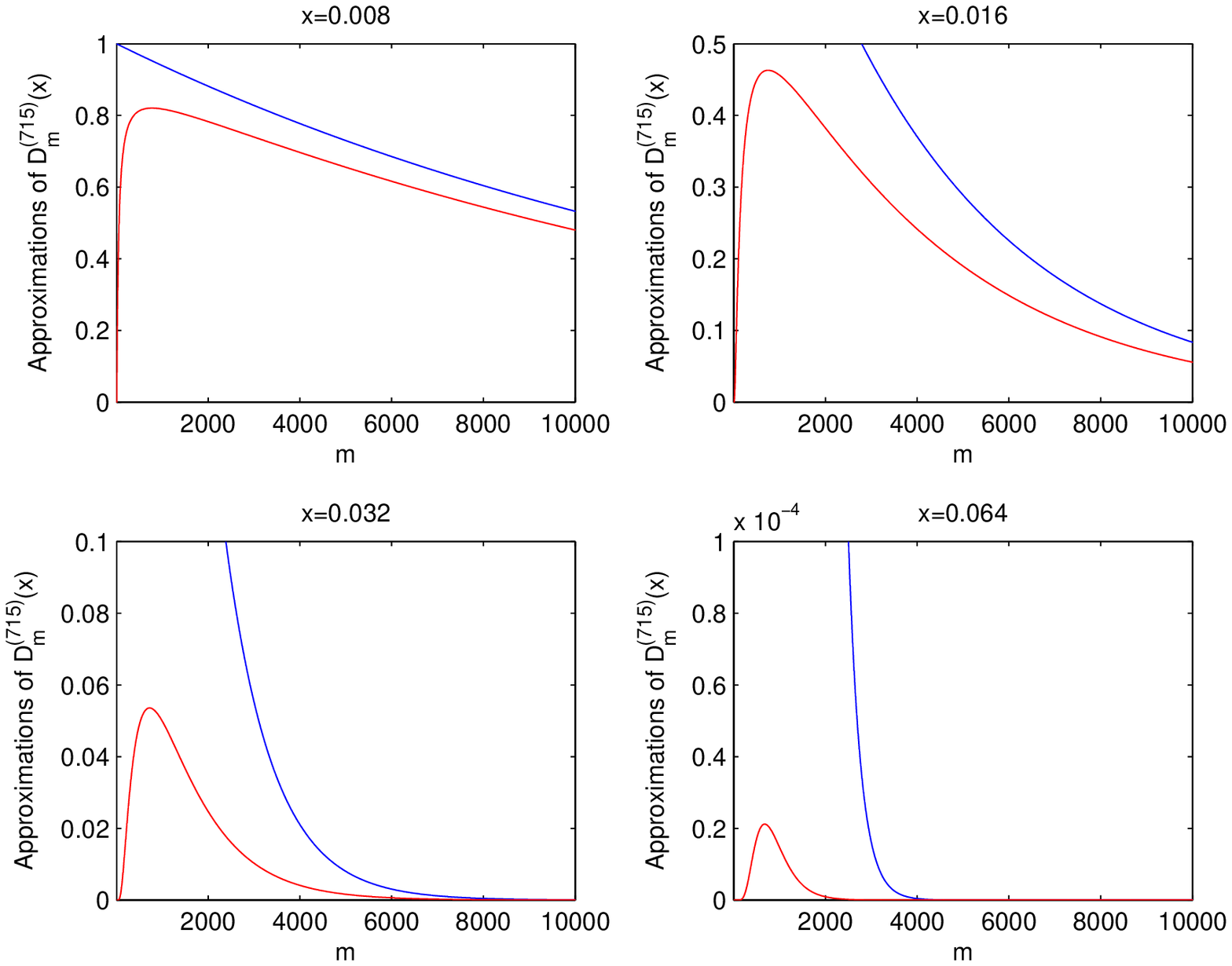}
\end{figure}
\end{small}

\clearpage

\begin{small}
\noindent Figure 2. \textit{Mastigamoeba normalized. Large deviation approximations of the estimate $\mathcal{D}_{m}^{(363)}(x)$ under the uncorrected (blue line) and corrected (red line) normalization rate.}
\begin{figure}[ht!]
\centering
\includegraphics[width=1\linewidth]{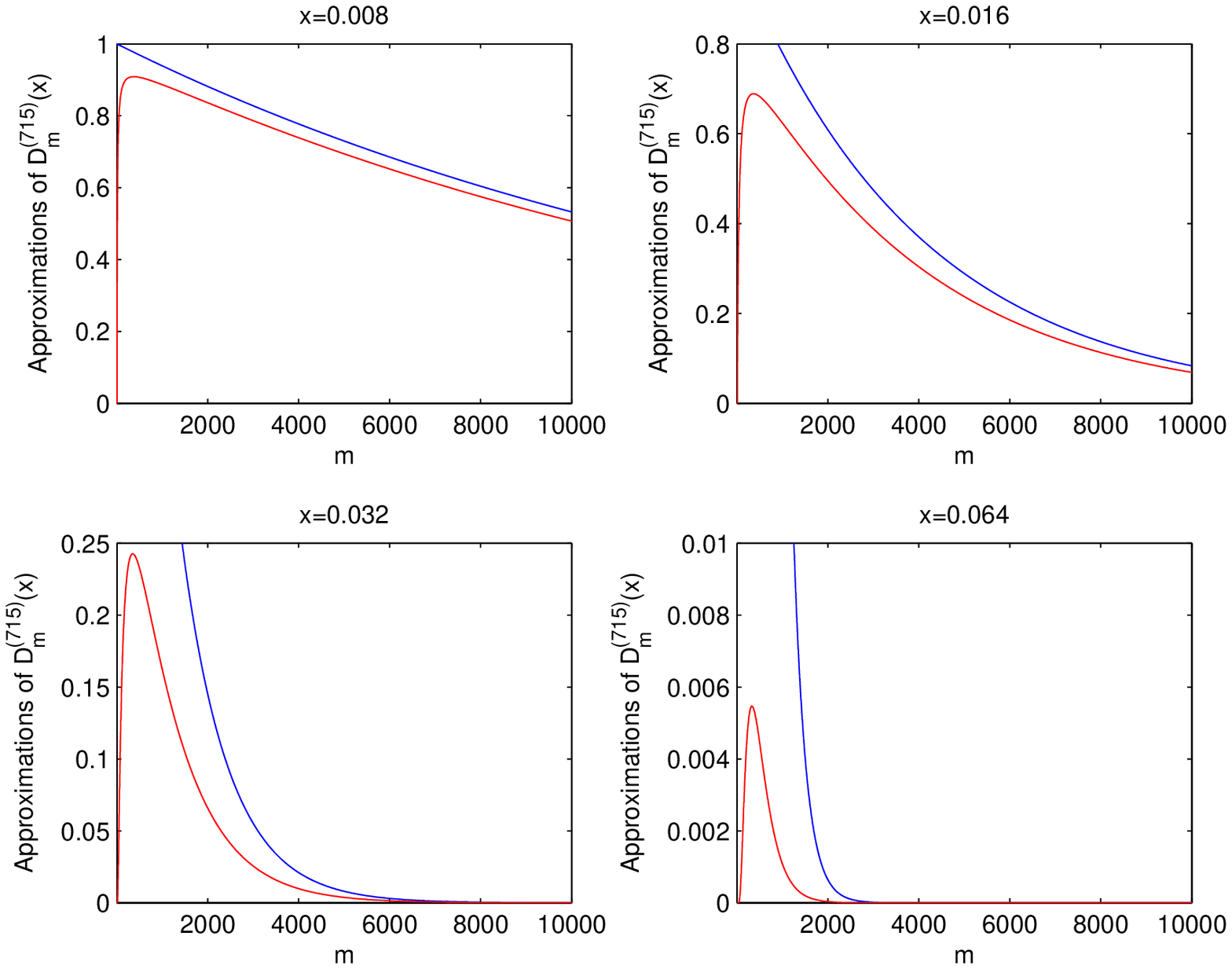}
\end{figure}
\end{small}


\begin{thebibliography}{9}

\bibitem{Arr(92)}
\textsc{Arratia, R., Barbour, A.D. and Tavar\'e, S.} (1992).  Poisson process approximations for the Ewens sampling formula.
\textit{Ann. Appl. Probab.} {\bf 2}, 519--535.

\bibitem{Arr(03)}
\textsc{Arratia, R., Barbour, A.D. and Tavar\'e, S.} (2003). \textit{Logarithmic combinatorial structures: a probabilistic approach}. EMS Monograph in Mathematics.

\bibitem{Bac(13)} 
\textsc{Bacallado, S., Favaro, S. and Trippa, L.} (2013). Looking-backward probabilities for Gibbs-type exchangeable random partitions. \textit{Bernoulli}, to appear.

\bibitem{Bar(09)}
\textsc{Barbour, A.D. and Gnedin, A.V.} (2009). Small counts in the infinite occupancy scheme. \textit{Electron. J. Probab.}, \textbf{13}, 365--384.

\bibitem{Cha(05)}
\textsc{Charalambides, C.A.} (2005). \textit{Combinatorial methods in discrete distributions}. Wiley Interscience.

\bibitem{DZ98}
\textsc{Dembo, A. and Zeitouni, O.} (1998) \textit{Large deviations techniques and applications}. Springer, New York.

\bibitem{Din(92)}
\textsc{Dinwoodie, I.H. and Zabell, S.L.} (1992). Large deviations for exchangeable random vectors. \textit{Ann. Probab.}, \textbf{20}, 1147-1166

\bibitem{Ewe(72)}
\textsc{Ewens, W.J.} (1972). The sampling theory of selectively neutral alleles. \textit{Theor. Popul. Biol.}, \textbf{3}, 87--112.

\bibitem{Fav(09)}
\textsc{Favaro, S., Lijoi, A., Mena, R.H. and Pr\"unster, I.} (2009). Bayesian nonparametric inference for species variety with a two parameter Poisson-Dirichlet process prior. \textit{J. Roy. Statist. Soc. Ser. B}, \textbf{71}, 993--1008.

\bibitem{Fav(13)}
\textsc{Favaro, S., Lijoi, A. and Pr\"unster, I.} (2013). Conditional formulae for Gibbs-type exchangeable random partitions. \textit{Ann. Appl. Probab.}, \textbf{23}, 1721--1754.

\bibitem{Fav(14)}
\textsc{Favaro, S., Feng, S.} (2014). Asymptotics for the conditional number of blocks in the Ewens-Pitman sampling model. \textit{Electron. J. Probab.}, \textbf{19}, 1--15

\bibitem{Feng(07)}
\textsc{Feng, S.} (2007). Large deviations associated with Poisson-Dirichlet distribution and Ewens sampling formula. \textit{Ann. Appl. Probab.}, \textbf{17},1570--1595.

\bibitem{Feng10}
\textsc{Feng, S.} (2010). \textit{The Poisson-Dirichlet distribution and related topics: models and asymptotic behaviors}, Springer,  Heidelberg.

\bibitem{Fen(98)}
\textsc{Feng, S. and Hoppe, F.M.} (1998). Large deviation principles for some random combinatorial structures in population genetics and Brownian motion. \textit{Ann. Appl. Probab.}, \textbf{8}, 975--994.

\bibitem{Goo(53)} 
\textsc{Good, I.J.} (1953). The population frequencies of species and the estimation of population parameters. \textit{Biometrika}, \textbf{40}, 237--264.

\bibitem{Goo(56)}  
\textsc{Good, I.J. and Toulmin, G.H.} (1956). The number of new species, and the increase in population coverage, when a sample is increased. \textit{Biometrika}, \textbf{43}, 45--63.

\bibitem{Gri(07)} 
\textsc{Griffiths, R.C. and Span\`o, D.} (2007). Record indices and age-ordered frequencies in exchangeable Gibbs partitions. \textit{Electron. J. Probab.}, \textbf{12}, 1101--1130.

\bibitem{Kor(73)} 
\textsc{Korwar, R.M. and Hollander, M.} (1973). Contribution to the theory of Dirichlet processes. \textit{Ann. Probab.}, \textbf{1},705--711.

\bibitem{Lij(07)}
\textsc{Lijoi, A., Mena, R.H. and Pr\"unster, I.} (2007). Bayesian nonparametric estimation of the probability of discovering a new species \textit{Biometrika}, \textbf{94}, 769--786.

\bibitem{Lij(08)} 
\textsc{Lijoi, A., Pr\"unster, I. and Walker, S.G.} (2008). Bayesian nonparametric estimators derived from conditional Gibbs structures. \textit{Ann. Appl. Probab.}, \textbf{18}, 1519--1547.

\bibitem{Per(92)}
\textsc{Perman, M., Pitman, J. and Yor, M.} (1992). Size-biased sampling of Poisson point processes and excursions. \textit{Probab. Theory Related Fields}, \textbf{92}, 21--39.

\bibitem{Pit(95)}
\textsc{Pitman, J.} (1995). Exchangeable and partially exchangeable random partitions. \textit{Probab. Theory Related Fields}, \textbf{102}, 145--158.

\bibitem{Pit(96a)}  
\textsc{Pitman, J.} (1996). Some developments of the Blackwell-MacQueen urn scheme. In {\it Statistics, Probability and Game Theory} (T.S. Ferguson, L.S. Shapley and J.B. MacQueen Eds.),
Hayward: Institute of Mathematical Statistics, 245--267.

\bibitem{Pit(96)}
\textsc{Pitman, J.} (1997). Partition structures derived from Brownian motion and stable subordinators. \textit{Bernoulli}, \textbf{3}, 79--66.

\bibitem{Pit(97)}
\textsc{Pitman, J. and Yor, M.} (1997). The two parameter Poisson-Dirichlet distribution derived from a stable subordinator. \textit{Ann. Probab.}, \textbf{25}, 855--900.

\bibitem{Pit(06)}
\textsc{Pitman, J.} (2006). \textit{Combinatorial stochastic processes.} Ecole d'Et\'e de Probabilit\'es de Saint-Flour XXXII. Lecture Notes in Mathematics N. 1875, Springer-Verlag, New York.

\bibitem{Sch(10)}
\textsc{Schweinsberg, J.} (2010).
The number of small blocks in exchangeable random partitions. \textit{ALEA Lat. Am. J. Probab. Math. Stat}. \textbf{7}, 217--242.

\bibitem{Sus(04)}
\textsc{Susko, E. and Roger, A.J.} (2004).
Estimating and comparing the rates of gene discovery and expressed sequence tag (EST) frequencies in EST surveys. \textit{Bioinformatics}, \textbf{20}, 2279--2287.

\end{thebibliography}
\end{document}